\theoremstyle{plain}
\newcommand{\rank}{\mathop{{\rm rank}}\nolimits}
\newcommand{\mlabel}[1]{\marginpar{#1}\label{#1}}
\newcommand{\g}{{\mathfrak g}}
\newcommand{\h}{{\mathfrak h}}
\newcommand{\fa}{{\mathfrak a}}
\newcommand{\fb}{{\mathfrak b}}
\newcommand{\fg}{{\mathfrak g}}
\newcommand{\fh}{{\mathfrak h}}
\newcommand{\fj}{{\mathfrak j}}
\newcommand{\fk}{{\mathfrak k}}
\newcommand{\fl}{{\mathfrak l}}
\newcommand{\fm}{{\mathfrak m}}
\newcommand{\fq}{{\mathfrak q}}
\newcommand{\fp}{{\mathfrak p}}
\newcommand{\fr}{{\mathfrak r}}
\newcommand{\fz}{{\mathfrak z}}
\renewcommand{\:}{\colon}
\newcommand{\1}{\mathbf{1}}
\newcommand{\subeq}{\subseteq}
\newcommand{\into}{\hookrightarrow}
\newcommand{\R}{{\mathbb R}}
\newcommand{\C}{{\mathbb C}}
\newcommand{\bP}{{\mathbb P}}
\renewcommand{\H}{{\mathbb H}}
\newcommand{\bH}{{\mathbb H}}
\newcommand{\bS}{{\mathbb S}}
\renewcommand{\tilde}{\widetilde}
\newcommand{\SL}{\mathop{{\rm SL}}\nolimits}
\newcommand{\SO}{\mathop{{\rm SO}}\nolimits}
\newcommand{\SU}{\mathop{{\rm SU}}\nolimits}
\newcommand{\OO}{\mathop{\rm O{}}\nolimits}
\newcommand{\fsl} {\mathop{{\mathfrak{sl} }}\nolimits}
\newcommand{\su}  {\mathop{{\mathfrak{su} }}\nolimits}
\newcommand{\so}  {\mathop{{\mathfrak{so} }}\nolimits}
\newcommand{\ad}{\mathop{{\rm ad}}\nolimits}
\newcommand{\Ad}{\mathop{{\rm Ad}}\nolimits}
\newcommand{\Iso}{\mathop{{\rm Iso}}\nolimits}
\newcommand{\tr}{\mathop{{\rm tr}}\nolimits}
\newcommand{\Herm}{\mathop{{\rm Herm}}\nolimits}
\newcommand{\Aut}{\mathop{{\rm Aut}}\nolimits}
\newcommand{\Diff}{\mathop{{\rm Diff}}\nolimits}
\newcommand{\diag}{\mathop{{\rm diag}}\nolimits}
\newcommand{\id}{\mathop{{\rm id}}\nolimits}
\newcommand{\rad}{\mathop{{\rm rad}}\nolimits}
\renewcommand{\dim}{\mathop{{\rm dim}}\nolimits}
\newcommand{\codim}{\mathop{{\rm codim}}\nolimits}
\newcommand{\dS}{\mathop{{\rm dS}}\nolimits}
\renewcommand{\phi}{\varphi}
\newcommand{\nin}{\noindent} 
\newcommand{\oline}{\overline}
\newcommand{\res}{\vert}
\newcommand{\Spec}{{\rm Spec}}
\newcommand{\ssssarr}{\hbox to 15pt{\rightarrowfill}}
\newcommand{\sssarr}{\hbox to 20pt{\rightarrowfill}}
\newcommand{\ssarr}{\hbox to 30pt{\rightarrowfill}}
\newcommand{\sarr}{\hbox to 40pt{\rightarrowfill}}
\newcommand{\arr}{\hbox to 60pt{\rightarrowfill}}
\newcommand{\larr}{\hbox to 60pt{\leftarrowfill}}
\newcommand{\Arr}{\hbox to 80pt{\rightarrowfill}}
\def\theoremname{Theorem}
\def\propositionname{Proposition}
\def\corollaryname{Corollary}
\def\lemmaname{Lemma}
\def\remarkname{Remark}
\def\conjecturename{Conjecture} 
\def\definitionname{Definition}
\def\exercisename{Exercise}
\def\examplename{Example}
\def\examplesname{Examples}
\def\problemname{Problem}
\def\problemsname{Problems}
\def\@thmcounter#1{\noexpand\arabic{#1}}
\def\@thmcountersep{}
\def\@begintheorem#1#2{\it \trivlist \item[\hskip 
\labelsep{\bf #1\ #2.\quad}]}
\def\@opargbegintheorem#1#2#3{\it \trivlist
      \item[\hskip \labelsep{\bf #1\ #2.\quad{\rm #3}}]}
\newtheorem{theor}{\theoremname}[section]
\newtheorem{propo}[theor]{\propositionname}
\newtheorem{coro}[theor]{\corollaryname}
\newtheorem{lemm}[theor]{\lemmaname}
\newenvironment{thm}{\begin{theor}\it}{\end{theor}}
\newenvironment{theorem}{\begin{theor}\it}{\end{theor}}
\newenvironment{corollary}{\begin{coro}\it}{\end{coro}}
\newenvironment{lem}{\begin{lemm}\it}{\end{lemm}}
\newenvironment{lemma}{\begin{lemm}\it}{\end{lemm}}
\newtheorem{rema}[theor]{\remarkname}
\newenvironment{remark}{\begin{rema}\rm}{\end{rema}}
\newenvironment{rem}{\begin{rema}\rm}{\end{rema}}
\newtheorem{stepnow}[theor]{}
\newtheorem{defin}[theor]{\definitionname} 
\newenvironment{definition}{\begin{defin}\rm}{\end{defin}}
\newenvironment{defn}{\begin{defin}\rm}{\end{defin}}
\newtheorem{exerc}{\exercisename}[section]
\newtheorem{exa}[theor]{\examplename}
\newenvironment{example}{\begin{exa}\rm}{\end{exa}}
\newenvironment{ex}{\begin{exa}\rm}{\end{exa}}
\newtheorem{exas}[theor]{\examplesname}
\newtheorem{conj}[theor]{\conjecturename}
\newtheorem{pro}[theor]{\problemname}
\newtheorem{prs}[theor]{\problemsname}
\newcommand{\pmat}[1]{\begin{pmatrix} #1 \end{pmatrix}}
\renewcommand{\mlabel}{\label}
\newcommand{\rO}{\mathrm{O}}
\newcommand{\rI}{\mathrm{I}}
\newcommand{\rS}{\mathrm{S}}
\numberwithin{equation}{section}
\renewcommand{\phi}{\varphi}
\newcommand{\bproof}{\begin{proof}}
\newcommand{\eproof}{\end{proof}}
\newcommand{\lsl}{\mathfrak{sl}}
\begin{document}


\title{Symmetric spaces with dissecting involutions}

\author{Karl-Hermann Neeb}
\address{Department of Mathematics, Friedrich-Alexander-University of Erlangen-N\"urnberg, Cauerstrasse 11, 91058 Erlangen, Germany}
\email{neeb@math.fau.de}

\author{Gestur \'{O}lafsson}
\address{Department of Mathematics, Louisiana State University, Baton Rouge, LA 70803, U.S.A.}
\email{olafsson@math.lsu.edu}
\thanks{The research of K.-H. Neeb was partially
supported by DFG-grant NE 413/9-1. The research of G. \'Olafsson was partially supported by Simons grant 586106.}
\begin{abstract}
\noindent
An involutive diffeomorphism $\sigma$ of a connected smooth manifold $M$ 
is called dissecting if the complement of its fixed point 
set is not connected. 
Dissecting involutions on a complete Riemannian manifold 
are closely related to constructive quantum field theory through the
work of Dimock and Jaffe/Ritter on the construction of reflection positive 
Hilbert spaces. In this article we classify all 
pairs $(M,\sigma)$, where $M$ is an irreducible connected symmetric space, 
not necessarily Riemannian,
and $\sigma$ is a dissecting involutive automorphism. 
In particular, we show that the only irreducible, connected and 
simply connected Riemannian symmetric spaces with 
dissecting isometric involutions are $\bS^n$ and $\H^n$, 
where the corresponding fixed point spaces are 
$\bS^{n-1}$ and $\H^{n-1}$, respectively. 
\end{abstract}

\keywords{Symmetric spaces, dissecting involutions}
\subjclass[2010]{57S25}

\maketitle



\section{Introduction}
\noindent
In this article we classify dissecting involutions on irreducible symmetric spaces.
An involutive diffeomorphism $\sigma$ of a connected smooth manifold $M$ 
is called {\it dissecting} if the complement of the fixed point 
manifold $M^\sigma$ is not connected. Then $M\setminus M^\sigma$ has 
two connected components $M_\pm$ exchanged by~$\sigma$ 
and $\codim M^\sigma = 1$. 

We are interested in the situation where $M$ is a symmetric space 
and $\sigma$ is an automorphism of~$M$. 
Any connected symmetric space is of the form 
$M \cong G/H$, where $G$ is a connected Lie group acting faithfully 
by automorphisms 
on $M$ and there exists an involutive automorphism $\tau$ of $G$ 
for which $H$ is an open subgroup of $G^\tau=\{g\in G \: \tau (g)=g\}$, 
the subgroup of $\tau$-fixed points. Then $\tau$ induces an involutive 
automorphism, also denoted $\tau$, on the Lie algebra $\g$ of $G$, 
so that we obtain the {\it symmetric Lie algebra} $(\g,\tau)$. 
Any dissecting involutive automorphism $\sigma_M$ 
of $M$, fixing the base point 
$eH \in M$, leads to another involutive automorphism $\sigma$ of $\g$, 
commuting with $\tau$, such that 
\begin{equation}
  \label{eq:disscond}
\dim(\g^{-\tau} \cap \g^{-\sigma}) = 1. 
\end{equation}
For a simply connected symmetric space $M = G/H$, 
this condition is equivalent to $\sigma_M$ being dissecting. 
Accordingly, we call a triple $(\g,\tau,\sigma)$,  
consisting of a Lie algebra and two commuting involutive automorphisms 
$\tau$ and $\sigma$, {\it dissecting} if \eqref{eq:disscond} is satisfied. 

In this note we classify all dissecting triples $(\g,\tau,\sigma)$ 
which are {\it irreducible} in the sense that $\g$ contains no 
non-trivial $\tau$-invariant ideals. 
On the global level, this classifies connected, simply connected 
symmetric spaces with dissecting involutions (Theorem~\ref{thm:2.4}). 
For any connected symmetric space $M$ with a dissecting involution 
$\sigma_M$, the canonical lift $\sigma_{\tilde M}$ 
of $\sigma_M$ to the simply connected covering $\tilde M$ 
is also dissecting, 
but the converse is in general not true (cf.\ Example~\ref{ex:pn}). 
The classification is easy to state: 

\begin{thm} Up to coverings 
connected irreducible symmetric space 
with a dissecting involutive automorphism is a connected component of a quadric 
\[ Q := \{ x \in \R^{p+q} \: \beta_{p,q}(x,x) = 1\},
\quad \mbox{ where } \quad 
\beta_{p,q}(x,y) = \sum_{j = 1}^p x_j y_j - \sum_{j = p+1}^{p+q}  x_j y_j. \]
Here $G = \SO_{p,q}(\R)_0$ and 
$H_0 \cong \SO_{p-1,q}(\R)_0$ or $\SO_{p,q-1}(\R)_0$. 
Up to conjugation, the 
dissecting involution is given in the first case 
by $\sigma (x_1,\ldots ,x_{p+q})=(-x_1,x_2,\ldots ,x_{p+q})$ and 
in the second case by $\sigma (x_1,\ldots ,x_{p+q})
=(x_1,\ldots , x_{n-1},-x_{p+q})$.
\end{thm}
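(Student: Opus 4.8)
The plan is to pass to the infinitesimal picture and reduce the statement to one about the curvature tensor of the symmetric space. By the discussion preceding the theorem it suffices to classify the irreducible dissecting triples $(\g,\tau,\sigma)$. Write $\g=\mathfrak h\oplus\mathfrak q$ for the $\tau$-eigenspace decomposition ($\mathfrak h=\g^\tau$, $\mathfrak q=\g^{-\tau}$) and refine it by the commuting involution $\sigma$ into the four joint eigenspaces $\mathfrak h_{\pm}=\g^\tau\cap\g^{\pm\sigma}$ and $\mathfrak q_{\pm}=\g^{-\tau}\cap\g^{\pm\sigma}$; the dissecting condition reads $\dim\mathfrak q_-=1$, and I fix a generator $v$ of $\mathfrak q_-$. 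First I would record the consequences of irreducibility. The $\tau$-invariant ideal generated by $\mathfrak q$ equals $\g$, so $[\mathfrak q,\mathfrak q]=\mathfrak h$ and $\g$ acts effectively on $M$; likewise the ideal generated by $\g^{-\sigma}$ is all of $\g$, so $\sigma\ne\id$; and the radical of $\g$, being a $\tau$-invariant ideal, is either $0$ or all of $\g$, and one checks that a solvable dissecting triple with $\dim\mathfrak q\ge2$ is impossible (the degenerate case $\dim\mathfrak q=1$, where $M$ is up to covering the circle $\bS^1$, is set aside). Hence $\g$ is semisimple and I may use the Killing form $\kappa$: it is nondegenerate and $\tau$- and $\sigma$-invariant, the four joint eigenspaces are mutually $\kappa$-orthogonal with $\kappa$ nondegenerate on each, and in particular $\kappa(v,v)\ne0$.

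Next I would write out the bracket relations forced by the two gradings: $\ad v$ interchanges $\mathfrak h_-$ and $\mathfrak q_+$, sends $\mathfrak h_+$ into $\mathfrak q_-=\R v$, and annihilates $\mathfrak q_-$; dually $[\mathfrak q_+,\mathfrak q_+]\subseteq\mathfrak h_+$, $[\mathfrak q_+,\mathfrak q_-]\subseteq\mathfrak h_-$ and $[\mathfrak h_+,\mathfrak q_-]\subseteq\mathfrak q_-$. In particular $\mathfrak q_+$ is a codimension-one Lie triple system in $\mathfrak q$, i.e.\ the fixed point hypersurface $N=M^\sigma$ through the base point is totally geodesic and $\sigma$ is the reflection in $N$. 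Using only the skew- and pair-symmetries of the curvature tensor $R(X,Y)Z=[[X,Y],Z]$ together with these relations, I would show that for $X,Y\in\mathfrak q_+$ one has $R(X,Y)v=0$ while $R(X,Y)$ preserves $\mathfrak q_+$ and restricts there to the curvature $R^N$ of $N$, and that $R(X,v)Y\in\R v$, $R(X,v)v\in\mathfrak q_+$, with $R(X,v)Y=-\kappa(v,v)^{-1}\kappa(TX,Y)\,v$, where $T:=R(\cdot,v)v$ is a $\kappa$-symmetric operator on $\mathfrak q_+$. Thus the curvature of $M$ at the base point is completely determined by the pair $(R^N,T)$, and $M$ has constant sectional curvature precisely when $T$ is a scalar operator.

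The heart of the proof is to show that $T$ is a nonzero scalar. Applying the identity expressing that $\ad[A,B]$ acts as a derivation of the triple product, namely $R(A,B)R(C,D)E=R(R(A,B)C,D)E+R(C,R(A,B)D)E+R(C,D)R(A,B)E$, to suitable elements of $\mathfrak q_+$ and to $v$, I would extract two facts: $T$ commutes with every $R^N(X,Y)$, hence with the holonomy algebra $[\mathfrak q_+,\mathfrak q_+]$ of $N$ acting on $\mathfrak q_+$; and $R^N(X,TY)Z=\kappa(v,v)^{-1}\bigl(\kappa(TY,Z)\,TX-\kappa(TX,Z)\,TY\bigr)$. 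The first fact makes each eigenspace of $T$ a $[\mathfrak q_+,\mathfrak q_+]$-invariant, hence totally geodesic, subspace of $\mathfrak q_+$; combining this with the second identity and the $\kappa$-orthogonality of the eigenspaces shows that $T$ cannot have two distinct nonzero eigenvalues, and that $\ker T=0$ — a nonzero kernel $W_0$ would satisfy $[W_0,v]=0$ and $[W_0,\mathfrak q_+\cap W_0^\perp]=0$, and would therefore split off a proper $\tau$-invariant ideal of $\g$, contradicting irreducibility. Hence $T=\mu\,\mathrm{id}_{\mathfrak q_+}$ with $\mu\ne0$. I expect this to be the main obstacle, mainly because in the pseudo-Riemannian case $T$ need not be semisimple and $[\mathfrak q_+,\mathfrak q_+]$ need not act irreducibly, so the eigenspace bookkeeping has to be refined to an analysis of the generalized eigenspaces (equivalently, of the restricted root decomposition of $\ad v$), and the low-dimensional cases $\dim\mathfrak q_+\le2$ have to be treated by hand.

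Once $T=\mu\,\mathrm{id}$ with $\mu\ne0$, the formulas above show that $R$ has the constant-curvature form $R(X,Y)Z=c\bigl(\kappa(Y,Z)X-\kappa(X,Z)Y\bigr)$ with $c\ne0$ (the value $c=0$ being excluded, since a flat symmetric space is reducible). Equivalently $[\mathfrak q,\mathfrak q]$ acts on $(\mathfrak q,\kappa|_{\mathfrak q})$ as the full pseudo-orthogonal algebra, so via $\ad|_{\mathfrak h}$ (injective by effectiveness) one obtains $\mathfrak h\cong\mathfrak{so}_{r,s}(\R)$, where $(r,s)$ is the signature of $\kappa|_{\mathfrak q}$, and $\g\cong\mathfrak{so}_{p,q}(\R)$ with the quadric symmetric-space structure, the sign of $c$ deciding whether the extra direction is positive or negative. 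Passing back to the groups, $M$ is up to covering a connected component of the quadric $Q=\{x\in\R^{p+q}:\beta_{p,q}(x,x)=1\}$ (the case $c<0$ corresponding to $\{\beta_{p,q}=-1\}$, a relabelling of the same family), with $G=\SO_{p,q}(\R)_0$, $H_0\cong\SO_{p-1,q}(\R)_0$ or $\SO_{p,q-1}(\R)_0$, and $\sigma$ the stated coordinate reflection (in coordinates where $v$ is the flipped axis and the base point lies on another axis). Finally I would dispose of the semisimple-but-not-simple case: irreducibility forces $\g=\g_1\oplus\g_2$ with both $\tau$ and $\sigma$ exchanging the two simple ideals, and $\dim\mathfrak q_-=1$ then forces $\g_1$ to be a real form of $\mathfrak{sl}_2$ carrying an involution with one-dimensional fixed space; this yields $\g\cong\mathfrak{so}_4(\R),\ \mathfrak{so}_{2,2}(\R)$ or $\mathfrak{so}_{1,3}(\R)$, again instances of the list above.
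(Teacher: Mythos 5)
Your route is genuinely different from the paper's: you work directly with the Lie triple system $\fq=\fq_+\oplus\R x_0$ and the operator $T=(\ad x_0)^2|_{\fq_+}$, aiming to show that the curvature has constant-curvature form and then reading off $\g\cong\so_{p,q}(\R)$, whereas the paper first reduces to the Riemannian case via a commuting Cartan involution and compact duality, shows the pair has real rank one with reduced root system $A_1$, quotes the classification of simple real Lie algebras to get $\so_{1,n}(\R)$, and finally classifies a third commuting involution to recover all $\so_{p,q}(\R)$. Your identities are correct: $R(X,Y)x_0=0$ for $X,Y\in\fq_+$ (equivalently $[\,[\fq_+,\fq_+],x_0]=0$, which follows from $\kappa$-skewness of $\ad$ and $\kappa(x_0,x_0)\neq0$), the commutation of $T$ with $\ad[\fq_+,\fq_+]|_{\fq_+}$, and the identity $[[X,TY],Z]=\kappa(x_0,x_0)^{-1}\bigl(\kappa(TY,Z)TX-\kappa(TX,Z)TY\bigr)$ are all short Jacobi/invariance computations, and granted $T=\mu\,\id_{\fq_+}$ with $\mu\neq0$, the passage to constant curvature, to $\fh\cong\so_{r,s}(\R)$ acting as the full pseudo-orthogonal algebra, and to the quadric model with $\sigma$ a coordinate reflection is sound; if completed, this is arguably more self-contained than the paper's argument, since it needs no case split and no classification tables.

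The genuine gap is exactly at the step you flag as the ``main obstacle'': proving $T=\mu\,\id$ with $\mu\neq0$. Your eigenspace bookkeeping works only if $T$ is diagonalizable with real spectrum, and your proposed remedy, an ``analysis of the restricted root decomposition of $\ad x_0$'', presupposes that $\ad x_0$ is semisimple -- which is precisely what has to be proved; a priori $x_0$ could be nilpotent or have a nontrivial Jordan decomposition, and then neither the eigenspace argument nor your claim that $W_0=\ker T$ satisfies $[W_0,x_0]=0$ (this uses $\ker(\ad x_0)^2=\ker(\ad x_0)$, i.e.\ semisimplicity, or definiteness of the form) is available. The paper closes exactly this hole in Lemma \ref{lem:sigmatau1}: using the uniqueness of the Jordan and elliptic--hyperbolic decompositions together with $\dim\fq_\fm=1$ and the reductivity of $\g^{\sigma\tau}$, it shows $x_0$ is semisimple, in fact elliptic or hyperbolic; and the centralizer/ideal argument in Theorem \ref{thm:6.6} (carried out after reduction to the Riemannian case, where the needed nondegeneracy of $\kappa$ on the relevant invariant subspaces is automatic) is the analogue of your $\ker T=0$ step. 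If you import that lemma, your argument does go through. Two smaller slips: in the non-simple case you assert that $\sigma$ must interchange the two ideals, but the componentwise possibility $\sigma(x,y)=(\sigma_1x,\sigma_1y)$ has to be excluded (as in the paper's Lemma \ref{lem:4.4}); and $\so_{1,3}(\R)\cong\fsl_2(\C)$ is simple, so it does not arise from $\g_1\oplus\g_1$ but from the complex-simple case -- also note the ``degenerate'' abelian case you set aside does occur in the classification, as $\so_2(\R)$ and $\so_{1,1}(\R)$. These are minor, since a completed version of your main argument covers all semisimple cases uniformly.
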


Our motivation to classify symmetric spaces with dissecting involutions 
comes from constructions in Quantum Field Theory and our work on
{\it reflection positivity}, see \cite{NO18} and the references therein. 
Starting  from a pair $(M,\sigma)$ of a 
complete Riemannian manifold $M$ and a dissecting involutive 
isometry one produces a  reflection positive Hilbert space 
\cite{Di04, JR08, AFG86, NO18} 
which provides the basis for constructing relativistic field theories 
on a Lorentzian manifold. Typical examples arise in physics 
from isometric dissecting involutions on 
flat euclidean space $\R^n$, the (positively curved) sphere $\bS^n$ 
and the (negatively curved) hyperbolic space $\bH^n$. 
Our classification implies in particular that these three types 
cover all (irreducible) Riemannian symmetric spaces with dissecting 
isometries. The special case of $M=\bS^n$ and related questions of
harmonic analysis and representation theory  is explored in \cite{NO19}.

The article is organized as follows. In Section \ref{sec:manifolds} 
we introduce the basic concepts. In Section~\ref{sec:dissec} 
we discuss several examples of homogeneous spaces 
with dissecting involutions, and Section~\ref{sec:classif} 
is devoted to the proof of the classification theorem.

 \section{Dissecting involutions on manifolds and symmetric spaces} \label{sec:manifolds} 
 \noindent
In this section we introduce the two basic concepts discussed in
this note, dissections and symmetric spaces. 

\subsection{Dissections on manifolds}
\begin{defn} \mlabel{def:diss} 
(a) A diffeomorphism $\sigma$ of a connected smooth 
manifold $M$ is called {\it dissecting} if the complement of the 
fixed point set $M^\sigma$ is not connected. 

\nin (b) A smooth involution $\sigma \: M \to M$ 
is  called a {\it reflection}  
if there exists a fixed point $p \in M^\sigma$ such that the fixed point space of 
the linear involution $T_p(\sigma)$ on~$T_p(M)$ is a hyperplane. 
\end{defn} 

 \begin{thm}\label{thm:dissManif} Let $M$ be a connected smooth manifold. 
\begin{itemize}
\item[\rm(i)] If $g$ is a complete Riemannian metric on $M$ and 
$\sigma$ is a dissecting isometry, then $\sigma$ is an involution. 
\item[\rm(ii)] If $\sigma \in \Diff(M)$ is an involution, then there exists a 
$\sigma$-invariant 
complete Riemannian metric on $M$. 
\item[\rm(iii)] If $\sigma \in \Diff(M)$ is a dissecting involution, 
then $M \setminus M^\sigma$ has two connected components $M_\pm$ with 
$\sigma(M_\pm) = M_\mp$ and each component of $M^\sigma$ is of codimension~one. 
\item[\rm(iv)] If $\sigma \in \Diff(M)$ is a reflection 
and $M$ is simply connected, then $\sigma$ is dissecting and its fixed point set 
$M^\sigma$ is a connected orientable hypersurface.
\end{itemize}
\end{thm}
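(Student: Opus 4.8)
The plan is to prove (ii) first and use it to reduce (iii) and (iv) to the case of an isometry (in (i) this is already the hypothesis). For (ii), pick any complete Riemannian metric $g_0$ on $M$ (these always exist on a smooth manifold) and put $g:=\tfrac12(g_0+\sigma^*g_0)$; since $\sigma^2=\id$ this is a $\sigma$-invariant Riemannian metric, and $g\ge\tfrac12 g_0$ pointwise, so $d_g\ge\tfrac{1}{\sqrt2}\,d_{g_0}$ and hence $d_g$-Cauchy sequences are $d_{g_0}$-Cauchy and converge; thus $g$ is complete. The only local input I will use is the normal form for an isometry $\sigma$ near a fixed point $x$: $\sigma(\exp_x v)=\exp_x(T_x\sigma\cdot v)$ for small $v$, so near $x$ the fixed-point set is $\exp_x(\ker(T_x\sigma-\1))$, an embedded submanifold, and on a tubular neighbourhood $\sigma$ is conjugate to the linear map $T_x\sigma$. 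In particular each component $N$ of $M^\sigma$ is a closed embedded submanifold, and near a point of $N$ the set $M\setminus M^\sigma$ has exactly one component if $\codim N\ge2$ and exactly two (interchanged by $\sigma$, which acts by $-\1$ on the normal line) if $\codim N=1$.

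\textbf{Parts (i) and (iii).} Removing a closed submanifold all of whose components have codimension $\ge 2$ does not disconnect a connected manifold, so a dissecting $\sigma$ must have a component $N$ of $M^\sigma$ of codimension $\le 1$; codimension $0$ would force $\sigma=\id$ and $M\setminus M^\sigma=\eset$, so $\codim N=1$. For (i): at $x\in N$ the orthogonal map $T_x\sigma$ fixes a hyperplane, hence acts by $-1$ on the normal line (the value $+1$ would give $T_x\sigma=\1$), so $T_x(\sigma^2)=\1$; an isometry of a connected Riemannian manifold that fixes a point and has identity differential there is the identity, so $\sigma^2=\id$. For (iii) we may now assume (via (ii)) that $\sigma$ is an isometric involution. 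Since an isometry equal to the identity on a nonempty open set is the identity, $M^\sigma$ has empty interior, so the components $U_i$ of $M\setminus M^\sigma$ (open, permuted by $\sigma$) satisfy $M=\bigcup_i\overline{U_i}$ and $\overline{U_i}\subseteq U_i\cup M^\sigma$, hence $\overline{U_i}\cap\overline{U_j}\subseteq M^\sigma$ for $i\ne j$. The normal form shows that each point of $M^\sigma$ lies in at most two of the $\overline{U_i}$, and that if $\overline{U_i}\cap\overline{U_j}\ne\eset$ with $i\ne j$ then the common point lies on a codimension-one component, the two local sides there lie in $U_i$ and $U_j$, and $\sigma$ interchanges them, so $\sigma(U_i)=U_j$. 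Connectedness of $M$ makes the intersection graph of the closed cover $\{\overline{U_i}\}$ connected, and combined with $\sigma^2=\id$ and the previous sentence this forces exactly two components $U_\pm$ with $\sigma(U_\pm)=U_\mp$; finally, since $\sigma$ fixes each point of $M^\sigma$ while interchanging $U_\pm$, every point of $M^\sigma$ lies in $\overline{U_+}\cap\overline{U_-}$ and therefore on a codimension-one component. This gives (iii).

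\textbf{Part (iv).} Let $M$ be simply connected and $\sigma$ a reflection with reflection point $p$; by (ii) we may take $\sigma$ isometric, and then the component $N_0$ of $M^\sigma$ through $p$ is a closed embedded hypersurface. First, $\sigma$ dissects: if $M\setminus M^\sigma$ were connected, join a point $x$ on one local side of $N_0$ near $p$ to $\sigma(x)$ on the other side by a path in $M\setminus M^\sigma$ and close it up with a short arc through $p$ meeting $M^\sigma$ transversally once; the resulting loop $\gamma$ meets the closed hypersurface $N_0$ transversally an odd number of times. But if $\gamma$ bounded a compact surface $S$ made transverse to $N_0$, then $S\cap N_0$ would be a compact $1$-manifold with boundary $\gamma\cap N_0$, so $\#(\gamma\cap N_0)$ would be even; hence $\gamma$ is not null-homologous mod $2$, contradicting $\pi_1(M)=0$. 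So $\sigma$ is dissecting, and by (iii) the complement has exactly two components $M_\pm$, every component of $M^\sigma$ has codimension one, and $\overline{M_+}\cap\overline{M_-}=M^\sigma$. By the normal form $\overline{M_+}$ and $\overline{M_-}$ are smooth manifolds with common boundary $M^\sigma$, each connected, and $M=\overline{M_+}\cup\overline{M_-}$; the Mayer--Vietoris sequence with integral coefficients, using $H_1(M)=0$ and $\tilde H_0(\overline{M_\pm})=0$, gives $\tilde H_0(M^\sigma)=0$, so $M^\sigma$ is connected. Lastly, a tubular neighbourhood of $M^\sigma$ with $M^\sigma$ deleted lies in $M_+\sqcup M_-$ and meets both, hence is disconnected, so the normal bundle of $M^\sigma$ is trivial, i.e. $M^\sigma$ is two-sided; and $M$, being simply connected, is orientable, so the two-sided hypersurface $M^\sigma$ is orientable. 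This proves (iv).

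\textbf{Main obstacle.} The averaging in (ii) and the normal form are routine. The real content is (iii): upgrading ``some component has codimension one'' to ``every component does,'' which I do not see how to extract from a single point and which forces the global bookkeeping with the cover $\{\overline{U_i}\}$ above; and the first step of (iv), that a reflection automatically dissects a simply connected manifold, which seems to require the mod $2$ intersection/bordism argument (a small dose of Poincar\'e--Lefschetz duality for the closed, possibly noncompact, hypersurface $N_0$). An alternative for (iv) is to work with the quotient $M/\langle\sigma\rangle$ and covering-space theory over the complement of the higher-codimension fixed components, but the intersection-number argument seems cleanest.
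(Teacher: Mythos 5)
Your proposal is correct, but it takes a genuinely different route from the paper: the paper's proof of this theorem is essentially by citation — (i) and (iii) are quoted from Alekseevsky--Kriegl--Losik--Michor (Lemma~2.7 there), (iv) from their Theorem~2.8, and (ii) combines Nomizu--Ozeki with a Hopf--Rinow argument for completeness of $g+\sigma^*g$ — whereas you prove everything from scratch. Your (ii) is essentially the paper's argument (averaging a Nomizu--Ozeki metric; you get completeness by metric comparison instead of Hopf--Rinow). For (i) and (iii) you replace the citation by the normal form $\sigma\circ\exp_x=\exp_x\circ T_x\sigma$, the observation that codimension-$\ge 2$ fixed components cannot disconnect, and a combinatorial analysis of the closed cover $\{\overline{U_i}\}$; the step forcing \emph{every} component of $M^\sigma$ to have codimension one via $\sigma(x)=x\in\sigma(\overline{U_+})=\overline{U_-}$ is a clean argument and does the real work. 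For (iv) you use a mod-$2$ intersection count of a loop through $p$ with the closed hypersurface $N_0$ to show $\sigma$ dissects, then Mayer--Vietoris for $M=\overline{M_+}\cup\overline{M_-}$ to get connectedness of $M^\sigma$, and triviality of the normal bundle plus orientability of $M$ for orientability of $M^\sigma$ — all correct, and in spirit close to what is done in the cited reference, but worked out independently. Two small points you should make explicit if this is to stand alone: the connectedness of the intersection graph of $\{\overline{U_i}\}$ uses that this closed cover is locally finite (which does follow from your ``each point lies in at most two closures'' observation, since points of $U_i$ meet only $\overline{U_i}$), and the Mayer--Vietoris sequence for the \emph{closed} cover $\overline{M_\pm}$ needs the standard excisive-couple/collar justification, available because $\overline{M_\pm}$ are smooth manifolds with common boundary $M^\sigma$; also, only the pointwise (geodesic ball) version of the normal form is needed, not a tubular neighbourhood of a whole fixed component. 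What the paper's route buys is brevity; what yours buys is a self-contained proof that does not lean on the reflection-group machinery of the cited work.
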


\begin{proof} (i) follows from \cite[Lemma~2.7]{AKLM06}. 

\nin (ii) From \cite{NO61} we obtain a complete Riemannian 
metric $g$ on $M$. Then $\tilde g := g + \sigma^*g$ is also complete 
by the Hopf--Rinow Theorem because its closed balls are contained in closed 
$g$-balls, hence compact. 

\nin (iii), (iv): By (ii), we can assume that $\sigma$ is isometric with respect to a 
complete Riemannian metric. Hence (iii) follows from \cite[Lemma~2.7]{AKLM06} 
and (iv) from \cite[Thm.~2.8]{AKLM06}. 
\end{proof}

\subsection{Symmetric spaces} 

\begin{definition} \mlabel{def:ss} (a) Let $M$ be a smooth manifold and 
$\mu \: M \times M \to M, (x,y) \mapsto x \cdot y =: s_x(y)$ 
be a smooth map with the following properties: 
each $s_x$ is an involution for which $x$ is an  isolated fixed point and 
\begin{equation}
  \label{eq:symspcond}
 s_x(y \cdot z) = s_x(y)\cdot s_x(z) \quad \mbox{ for all } \quad x,y \in M.
\end{equation}
Then we call $(M,\mu)$ a {\it symmetric space}. 

\nin (b) A morphism of symmetric spaces $M$ and $N$ is a smooth
map $\varphi : M\to N$ such that $\varphi (x\cdot y)= \varphi (x)\cdot 
\varphi (y)$ for $x, y \in M$.
\end{definition}

As shown in \cite{Lo69}, connected symmetric spaces are homogeneous spaces 
of Lie groups and they arise from the following construction, which 
goes back to \'E. Cartan (see \cite{Hel78} for a detailed discussion).
Let $G$ be a Lie group and $\tau : G\to G$ be an involutive automorphism.
If $H\subset G^\tau$ is an open subgroup, then 
the homogeneous space $M := G/H$ is a symmetric space with respect to 
the involutions $s_{gH}(xH) := g\tau(g^{-1}x)H$.
Then $\tau$ defines an involution of $\g$, also 
denoted by $\tau$. We have a decomposition of $\g$ into eigenspaces $\fg=\fh\oplus \fq$
where $\fh=\g^{\tau}$ and $\fq=\g^{-\tau}$. Here 
$\fh$ is the Lie algebra of $H$, and
$\fq$ is identified with the tangent space $T_{eH}(G/H)$. 
 
Let $(M, (s_x)_{x \in M})$ be a connected symmetric space 
and $\sigma_M$ a dissecting involutive automorphism of $M$. 
Then the subset $M^{\sigma_M}$ of fixed points is non-empty 
and we may pick $x \in M^{\sigma_M}$. Writing $M \cong G/H$ for a 
connected transitive Lie group $G$ of automorphisms 
and the stabilizer group $H := G_x$, 
the reflection $s_x$ induces an involutive automorphism 
$\tau$ on the Lie algebra~$\g$ of $G$, and $\sigma_M(x) = x$ implies that 
$\sigma_M$ commutes with $s_x$ by \eqref{eq:symspcond}, so that it induces an involution 
$\sigma$ on $\g$ commuting with $\tau$. That $\sigma_M$ is dissecting 
implies that $(\g^{-\tau})^{\sigma}$ is a hyperplane in 
$\g^{-\tau} \cong T_x(M)$. This proves the ``only if'' direction in the following theorem. 
 
\begin{theorem} \mlabel{thm:2.4} Let $M=G/H$ be a simply connected 
symmetric space and $\sigma_M $ an involutive automorphism of $M$ leaving
the base point $x=eH$ invariant. Denote the corresponding involution
on $\fg$ by~$\sigma$. 
Then $\sigma_M$ is dissecting  if and only if $\dim (\g^{-\sigma}\cap \g^{-\tau})=1$.
\end{theorem}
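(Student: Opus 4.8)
The plan is to prove the ``if'' direction, since the ``only if'' direction is already established in the paragraph preceding the theorem. So assume $\dim(\g^{-\sigma}\cap\g^{-\tau})=1$; we must show $\sigma_M$ is dissecting, i.e.\ that $M\setminus M^{\sigma_M}$ is disconnected. The natural strategy is to reduce to Theorem~\ref{thm:dissManif}(iv): I would show that $\sigma_M$ is a \emph{reflection} in the sense of Definition~\ref{def:diss}(b), and then invoke simple connectedness of $M$ to conclude that $\sigma_M$ is dissecting with connected orientable fixed-point hypersurface. To apply (iv) it suffices to exhibit a fixed point of $\sigma_M$ at which the tangent involution has a hyperplane as fixed-point space; the base point $x=eH$ is the obvious candidate.

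First I would identify $T_x(M)\cong\g^{-\tau}=\fq$ in the standard way, and check that under this identification $T_x(\sigma_M)$ is precisely the restriction $\sigma|_{\fq}$. This is where commutativity of $\sigma$ and $\tau$ is used: $\sigma$ preserves the $\pm1$-eigenspaces of $\tau$, so $\sigma(\fq)=\fq$ and $\sigma|_{\fq}$ is a well-defined linear involution on $T_x(M)$. The fixed-point space of $T_x(\sigma_M)$ is then $\fq^{\sigma}=\g^{\sigma}\cap\g^{-\tau}$, and the $(-1)$-eigenspace is $\g^{-\sigma}\cap\g^{-\tau}$, which by hypothesis is one-dimensional. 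Hence $\fq^{\sigma}$ has codimension one in $\fq=T_x(M)$, i.e.\ it is a hyperplane, so $\sigma_M$ is a reflection with fixed point $x$. (A small point to check: that $x$ is genuinely isolated as a fixed point of $s_x$ is part of the symmetric-space axioms, and that $\sigma_M$ really does fix $x$ is a hypothesis; one should also note $\sigma_M$ is an involution, which is part of being an involutive automorphism.)

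Having shown $\sigma_M$ is a reflection, and given that $M$ is simply connected, Theorem~\ref{thm:dissManif}(iv) immediately yields that $\sigma_M$ is dissecting (and that $M^{\sigma_M}$ is a connected orientable hypersurface). That completes the ``if'' direction, and together with the already-proven ``only if'' direction the theorem follows.

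The step I expect to be the only real subtlety is the identification $T_x(\sigma_M)=\sigma|_{\fq}$. One must be a little careful about how the automorphism $\sigma_M$ of the symmetric space relates to the Lie-algebra involution $\sigma$: by definition $\sigma$ is obtained by differentiating an automorphism of $G$ compatible with $\sigma_M$ (an automorphism $\Sigma$ of $G$ with $\Sigma(H)\subseteq H$ inducing $\sigma_M$ on $G/H$), and one must verify that differentiating the induced map $G/H\to G/H$ at $eH$ gives exactly the map on $\fq\cong T_{eH}(G/H)$ induced by $d\Sigma_e=\sigma$. This is a routine chain-rule computation with the canonical projection $G\to G/H$, but it is the place where the correspondence between the manifold-level and algebra-level pictures has to be pinned down. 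Everything else is bookkeeping about eigenspace decompositions of the two commuting involutions.
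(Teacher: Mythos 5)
Your proposal is correct and follows essentially the same route as the paper: the ``only if'' direction is quoted from the preceding discussion, and the ``if'' direction identifies $T_{eH}(\sigma_M)$ with $\sigma\res_{\fq}$, observes its fixed-point space $\fq^{\sigma}=\g^{\sigma}\cap\g^{-\tau}$ is a hyperplane, and invokes Theorem~\ref{thm:dissManif}(iv) together with simple connectedness. The only difference is that you spell out the tangent-space identification, which the paper asserts without detail.
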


\begin{proof} If $\sigma_M$ is dissecting, then we have just seen that 
$\dim (\g^{-\sigma}\cap \g^{-\tau})=1$. 
The other direction follows from Theorem \ref{thm:dissManif}(iv)  
because the involution $T_{eH}(\sigma_M)$ on $T_{eH}(M)$ 
corresponds to the involution $\sigma\res_{\fq}$ whose fixed point space 
$\fq^{\sigma} = \g^{-\tau} \cap \g^\sigma$ is a hyperplane. 
\end{proof}

\begin{defn} We call a triple $(\g, \tau,\sigma)$, consisting of a 
real Lie algebra $\g$ and two commuting involutions 
$\tau, \sigma \in \Aut(\g)$ {\it dissecting} if 
\begin{equation}
  \label{eq:discont}
\dim (\g^{-\tau} \cap \g^{-\sigma}) = 1.
\end{equation}
A triple $(\g,\tau,\sigma)$ is called 
{\it semisimple} if $\g$ is semisimple. 
We say that it is 
{\it irreducible} if $(\g,\tau)$ is irreducible, i.e., $\g^{-\tau} \not= \{0\}$,
 and $\{0\}$ and $\g$ are the only $\tau$-invariant ideals of~$\g$. 
\end{defn}

\begin{example} \mlabel{ex:pn}
We now show that a dissecting triple $(\g,\tau,\sigma)$ 
may also correspond to symmetric spaces $G/H$ on which the involution induced 
by $\sigma$ is not dissecting. By Theorem~\ref{thm:2.4} 
this can only happen if $G/H$ is not simply connected. 

For $n\ge 1$ and $e_1,\ldots , e_{n+1}$ the standard base for $\R^{n+1}$ we denote by  
$r_j$ the reflection $r_j (x)=x-2x_je_j$.  The 
$n$-dimensional sphere $M=\bS^n\cong \SO_{n+1}(\R)e_1 \cong 
\SO_{n+1}(\R) /\SO_n(\R)$ 
is a 
connected symmetric space in which the base point $e_1$ 
corresponds to the involution $\tau(x)=r_1 xr_1$.  At the same time, the involution $\sigma :=r_1$ is dissecting with fixed point space 
$M^\sigma=\{x \in \bS^n \: x_1=0 \} \cong \bS^{n-1}$, and 
$M_\pm=\{x\in \bS^n \: \pm x_1>0\}$ are the two connected components 
of $M \setminus M^\sigma$. 

Now replace $M$ by the projective space 
\[\bP^{n}=\bS^n/\{\pm \1\}\cong \SO_{n+1}(\R)/\rS(\rO_1(\R)\times \rO_n(\R)).\]
Using projective coordinates we
have
\[\sigma([\xi_1,\ldots , \xi_{n+1}])
=[-\xi_1,\xi_2,\ldots , \xi_{n+1}]=[\xi_1,-\xi_2,\ldots ,-\xi_{n+1}]\]
and  $(\bP^n)^\sigma \cong \bP^{n-1}$, the complement of the fixed point set 
is the open cell 
\[ M_+/\{\pm \1\} = M_-/\{\pm \1\}
= \{[\xi_1,\ldots ,\xi_{n+1}]\: \xi_1\not= 0\},\] 
hence connected. 
\end{example}

\begin{definition} Let $\fg$ be a Lie algebra and $\tau : \fg \to \fg$ 
an involutive automorphism. We write 
$\fh = \ker (\tau -\1)$ and $\fq= \ker (\tau +1)$, so that $\fg = \fh+\fq$. 

\nin (a) The pair $(\fg,\tau)$ as well as the pair $(\fg,\fh)$, is called a 
{\it symmetric pair}.  

\nin (b) The \textit{Cartan dual} 
of $(\fg,\tau)$ is the symmetric pair $(\fg^c, \tau^c)$ 
given by $\fg^c:= \fh+i\fq$ and $\tau^c(x+iy)
=x-iy$ for $x\in \fh$ and $y\in\fq$.

\nin (c) If $(\fg,\tau,\sigma)$ a triple, 
where $\tau$ and $\sigma : \fg\to \fg$ are commuting 
involutive automorphisms, then the \textit{Cartan dual} of 
$(\fg,\tau,\sigma)$ is 
the triple $(\fg^c,\tau^c,\sigma^c)$ with 
$\tau^c(x+iy) =x-iy$ and 
$\sigma^c(x+iy) =x-iy$ for $x \in \h, y \in \fq$. 
\end{definition}

\begin{rem} \mlabel{rem:irred} (a) 
If $(\g,\tau,\sigma )$ is an irreducible dissecting triple and  $\tau = \sigma$, then \eqref{eq:discont} implies 
$\dim \g^{-\tau} = 1$, so that $\g^{-\tau}$ is a one-dimensional ideal of $\g$. 
Hence $\tau\not=\sigma$ if $(\g,\tau,\sigma)$ is semisimple, 
irreducible and dissecting. 

\nin (b) If $(\g,\tau)$ is irreducible and $\g$ is not semisimple, 
then $\dim \g = 1$ and $\tau = -\id_\g$. In fact, the radical 
$\fr := \rad(\g)$ is a non-zero $\tau$-invariant ideal, hence equal to 
$\g$. As its commutator algebra $[\fg,\fg]$ is proper, it is zero, 
so that $\g$ is abelian. As every one-dimensional subspace of 
$\g^{-\tau} \not=\{0\}$ is an ideal, $\g = \g^{-\tau}$ is one-dimensional. 

\nin (c) Clearly, $(\g,\tau,\sigma)$ is dissecting if and only if 
$(\g,\sigma, \tau)$ is dissecting. 
It is easy to see that $(\g,\tau,\sigma)$ 
is dissecting
if and only if the Cartan dual ($c$-dual) $(\g^c, \tau^c,\sigma^c)$  
 is 
dissecting because $(\g^c)^{-\tau^c} \cap (\g^c)^{-\sigma^c} 
= i (\g^{-\tau} \cap \g^{-\sigma})$. 

\nin (d) The dual symmetric Lie algebra $(\g^c, \tau^c)$ is irreducible 
if and only if $(\g,\tau)$ is. In fact, $\tau$-invariant ideals 
of $\g$ are of the form $\fj = \fj_\fh \oplus \fj_\fq$, 
where 
\[  [\fh, \fj_\fh] \subeq \fj_\fh, \quad 
[\fh, \fj_\fq] \subeq \fj_\fq, \quad 
[\fq, \fj_\fh] \subeq \fj_\fq, \quad 
[\fq, \fj_\fq] \subeq \fj_\fh,\] 
and these conditions are equivalent to $\fj^c := \fj_\fh \oplus i \cdot\fj_\fq$ 
being an ideal of $\g^c$. 
\end{rem}

\section{Examples of 
symmetric spaces with dissecting involutions} 
\mlabel{sec:dissec}
\noindent
Let $(V,\beta)$ be a finite dimensional real vector space, 
endowed with a non-degenerate symmetric bilinear form 
$\beta$. Then every anisotropic element $x \in V$ defines an 
involution 
\[ \sigma_x(y) := y - 2 \frac{\beta(x,y)}{\beta(x,x)} x \]
fixing $x^\bot = \{ y \in V\: \beta(x,y) = 0\}$ pointwise, and for which 
$\R x$ is the $(-1)$-eigenspace. 

\begin{ex} \mlabel{ex:flat} If we consider $(V,\beta)$ as a flat semi-Riemannian 
symmetric space with respect to the point reflections 
$s_z(y) = 2z - y$, then  each $\sigma_x$ defines a dissecting 
isometric involution on $(V,\beta)$. 
\end{ex}

\begin{ex}  \mlabel{ex:quad} For $c \in \R^\times$, let 
\[ Q_c := Q_c(V,\beta) := \{ v \in V \: \beta(v,v) = c\} \] 
denote the corresponding quadric in $(V,\beta)$. 
Then $(Q_c, (-\sigma_x)_{x \in Q_c})$ 
is a symmetric space (Definition~\ref{def:ss}) 
and $\dim Q_c = \dim V -1$. 

If $\dim V = 1$, then $Q_c$ is discrete and contains at most two points, so
let us assume that $\dim V>1$. 
Let $M \subeq  Q_c$ be a connected component and let $x \in M$. 
Then $x^\bot$ contains a anisotropic element $y$ and 
the relations $\sigma_y(x) = x$ and $\sigma_y(Q_c) = Q_c$ show that 
$\sigma_y\res_M$ defines an involutive automorphism of the symmetric 
space $M$ for which $M^{\sigma_y} = M \cap y^\bot$ is non-empty. 
Moreover, $T_x(M \cap y^\bot) = T_x(M) \cap y^\bot 
= \{x,y\}^\bot$ is a hyperplane in $T_x(M) = x^\bot$. 
Therefore $M^{\sigma_y}$ is a hypersurface whose complement contains the two 
open subsets 
\[ M_\pm := \{ x \in M \: \pm \beta(x,y) > 0\} \]  
satisfying $\sigma_y(M_\pm) = M_\mp$. Thus $\sigma_y$ defines a dissecting 
involution of $M$ (Definition~\ref{def:diss}). 
That $M_\pm$ are in fact connected follows from a 
quick inspection of the possible quadrics in $3$-dimensional spaces. 

The space $(V,\beta)$ is isometric to some $(\R^{p+q},\beta_{p,q})$ 
with 
\[ \beta_{p,q}(x,y) = \sum_{j = 1}^p x_j y_j - \sum_{j = p+1}^{p+q}  x_j y_j.\] 
We can always assume that $c=1$ because dilation 
by $r \in \R^\times$ is an isomorphism of symmetric spaces 
from $Q_c$ to $Q_{r^2c}$ and 
the quadric $Q_{-1}$ in $(\R^{p+q},\beta_{p,q})$ is isomorphic to the 
quadric $Q_1$ in $(\R^{p+q},\beta_{q,p})$. 
In particular, all these quadrics have isomorphic symmetry groups  
$\OO_{p,q}(\R) \cong \OO_{q,p}(\R)$. 
Thus,
up to isomorphisms of symmetric spaces,
 we obtain the connected symmetric spaces 
$M^{p,q} \cong \SO_{p,q}(\R)_0/\SO_{p-1,q}(\R)_0$ 
as connected components of the quadric~$Q_1$. 
According to the sign of $\beta(y,y)$, we obtain two types of dissecting 
involutions on $M^{p,q}$. 
This leads to two types 
of symmetric spaces with dissecting involutions. 

Let $n := p + q -1 = \dim M \geq 1$. 
\begin{itemize}
\item For $q = 0$, we obtain the sphere $M^{n+1,0} = \bS^n\cong 
\SO_{n+1}(\R)/\SO_n(\R )$ with an isometric 
reflection $\sigma_{e_1}$ with fixed point set $\bS^{n-1}$ (an equator). 
\item For $p =1$, we obtain the hyperbolic space $M^{1,n} = \bH^{n}
\cong \SO_{1,n}(\R )/\SO_{n}(\R )$ 
with an isometric reflection with fixed point set $\bH^{n-1}$. 
\item For $q =1$, we obtain de Sitter space $M^{n,1}\cong 
 \dS^n\cong \SO_{n,1}(\R )/\SO_{n-1,1}(\R)$ and two possible 
fixed point sets, one isomorphic to $M^{n,0} = \bS^{n-1}\cong 
\SO_{n}(\R)/\SO_{n-1}(\R)$ 
and the other to  $M^{n-1,1} = \dS^{n-1}\cong \SO_{n-1,1}(\R)/\SO_{n-2,1}(\R)$. 
They correspond to the reflections $\sigma_y$ with 
$y = e_{n+1}$ and $e_n$, respectively. 
\end{itemize}
For $p, q > 1$ and the quadric $M^{p,q}$, 
we obtain fixed point manifolds of type $M^{p-1,q}$ and $M^{p,q-1}$. 
Note that $M^{p,q}$ is diffeomorphic to $\bS^{p-1} \times \R^q$, 
hence simply connected if $p \geq 3$. 

For the Lie algebra $\g = \so_{p,q}(\R)$ and an involution 
$\tau$ induced by a reflection $\sigma_{e_j}$ in $e_j^\bot$ for 
a basis vector $e_j$, we have 
\[ \g^c \cong 
\begin{cases}
\so_{p-1,q+1}(\R) & \text{ for } j \leq p \\ 
\so_{p+1,q-1}(\R) & \text{ for } j > p.  
\end{cases}\]
This follows easily by realizing $\g^c$ on the subspace 
$\R i e_j \oplus \sum_{k \not=j} \R e_k \subeq \C^{p+q}$.  
Therefore the class of dissecting triples obtained this way is 
stable under duality. 
\end{ex}

For $G = \SO_{p,q}(\R)_0$, we thus obtain dissecting triples 
$(\g,\tau,\sigma)$, where $\tau$ and $\sigma$ are induced by 
reflections in hyperplanes orthogonal to coordinate axes. 
All these examples are irreducible 
and we shall see that all irreducible semisimple dissecting 
triples are of this type. 

\begin{ex} \mlabel{ex:sl2} 
On the space $V := M_2(\R)$ of real $(2 \times 2)$-matrices, 
the determinant defines 
a quadratic form. From the relation 
$X^2 - (\tr X) X + \det(X)\1 = 0$, we derive that 
$\det(X) = \beta(X,X)$ for 
\[ \beta(X,Y) := \frac{1}{2}(\tr(X)\tr(Y) - \tr(XY)).\]  
This form is positive definite on $\R \1 \oplus \so_2(\R)$ and 
negative definite on its orthogonal complement 
$\fsl_2(\R) \cap \Herm_2(\R)$, hence of signature $(p,q) = (2,2)$. 
Therefore 
\[ M := \SL_2(\R) = \{ X \in M_2(\R) \: \det(X) = 1\} \] 
is a $3$-dimensional quadric and the 
$6$-dimensional group $G := \SO_{2,2}(\R)_0$ acts transitively on $M$. 
It contains the subgroup $\SL_2(\R)_L$ acting by left multiplications, 
and the subgroup $\SL_2(\R)_R$, acting by right multiplications. 
Clearly, $\SL_2(\R)_L \cap \SL_2(\R)_R = \{\pm \1\}$, so that 
$G = \SL_2(\R)_L \SL_2(\R)_R$ for dimensional reasons. 

Let $x := \1$, 
$y_1 := \pmat{0 & -1 \\ 1 & 0}$ and 
$y_2 := \pmat{1 & 0 \\ 0 & -1}$. Then 
$\beta(y_1, y_1) = 1$, 
$\beta(y_2, y_2) = -1$, and $y_{1/2}$ are both orthogonal to $x = \1$. 
We thus obtain two dissecting involutions $\sigma^M_{1/2}$ on $M = \SL_2(\R)$ from the 
corresponding orthogonal reflections. Their action on the Lie algebra 
$\fsl_2(\R) \cong T_\1(M)$ is given by 
\[ \sigma_1^\g(x) = x^\top \quad 
\mbox{ and } \quad
 \sigma_2^\g\pmat{a & b \\ c & -a} = \pmat{-a & b \\ c & a} 
= - I_{1,1} x I_{1,1} \quad \mbox{ for } \quad I_{1,1} = \pmat{1 & 0 \\ 0 & -1}.
\]The corresponding involutions on the symmetric space $M = \SL_2(\R)$ are 
\[ \sigma_1^G(g) = g^{\top} \quad \mbox{ and } \quad 
\sigma_2^G(g) = I_{1,1} g^{-1} I_{1,1}.\]
As 
$\sigma_1^G(g_1 g g_2^{-1}) = g_2^{-\top} \sigma_1^G(g) g_1^\top$ and 
$\sigma_2^G(g_1 g g_2^{-1}) = I_{1,1} g_2 I_{1,1}\sigma_2^G(g) I_{1,1} g_1^{-1} I_{1,1}$, 
the involutions induced on the Lie algebra 
$\g \cong \fsl_2(\R) \oplus \fsl_2(\R)$ are given by 
\[ \sigma_1(x,y) = (-y^\top, -x^{\top}) \quad \mbox{ and } \quad 
\sigma_2(x,y) = ( I_{1,1} y I_{1,1}, I_{1,1} x I_{1,1}). \] 
\end{ex}

\begin{ex} \mlabel{ex:su2} 
On the four-dimensional space $V := \bH = \R \1 \oplus \su_2(\C) 
\subeq M_2(\C)$ of quaternions, 
we consider the scalar product given by 
\[ \beta(x,y) := \frac{1}{2} \tr(xy^*).\] 
Then 
\[ M := \SU_2(\C) = \{ x \in \H \: \beta(x,x) = \det(x) = 1\} \] 
is a $3$-dimensional sphere. 
  
The $6$-dimensional group $G := \SO_4(\R)_0$ acts transitively on $M$. 
It contains the subgroup $\SU_2(\C)_L$ acting by left multiplications, 
and the subgroup $\SU_2(\C)_R$, acting by right multiplications. 
As above, $\SU_2(\C)_L \cap \SU_2(\C)_R = \{\pm \1\}$ and 
$G = \SU_2(\C)_L \SU_2(\C)_R$. 
As $x := \1$ and $y := \pmat{i  & 0 \\ 0 & -i}$ 
are orthogonal unit vectors, we 
obtain a dissecting involution $\sigma^M$ on $M = \SU_2(\C)$ from the 
orthogonal reflections $\sigma_y$. Its action on the Lie algebra 
$\su_2(\C) \cong T_\1(M)$ is given by 
\[  \sigma^\g\pmat{a i & b \\ -b^* & -a i} = \pmat{-a i & b \\ -b^* &  ai} 
=  - I_{1,1} x I_{1,1}.\] 
The corresponding involution on the symmetric space $M = \SU_2(\C)$ is 
$\sigma^G(g) = I_{1,1} g^{-1} I_{1,1},$ 
and the involution induced on the Lie algebra 
$\g \cong \su_2(\C) \oplus \su_2(\C)$ is  given by 
\[ \sigma(x,y) = ( I_{1,1} y I_{1,1}, I_{1,1} x I_{1,1}). \] 
\end{ex}

\section{The classification} 
\mlabel{sec:classif}
\noindent
In this subsection we show that all irreducible 
dissecting triples $(\g,\tau,\sigma)$ 
come from the quadrics as in Example~\ref{ex:quad}. 
Our strategy is to first reduce the
classification to the case where $\g$ is simple. Then we reduce it to
the case where $(\g,\tau)$ is Riemannian and eventually 
we have to classify certain triples of commuting involutions on 
$\so_n(\R)$. Here Riemannian means that there exists a Riemannian
symmetric space $M=G/H$ such that the Lie algebra of $G$ is $\fg$ and
the Lie algebra of $H$ is $\fh$.

We start with some simple observations concerning 
commuting involutions $\sigma$ and $\tau$ on a Lie algebra~$\g$. 
Let $\fh = \fg^\tau$, $\fq=\fg^{-\tau}$, $\fl = \fg^\sigma$, and $\fm=\fg^{-\sigma}$.
Then $\fg$ decomposes as
\[\fg = \fh\oplus \fq=\fl \oplus \fm = \fh_\fl \oplus \fh_\fm \oplus \fq_\fl \oplus \fq_\fm \]
where the indices $\fl$ and $\fm$, resp., indicate intersections 
with $\fl$ and $\fm$, respectively. 
We assume
that the symmetric Lie algebra $(\g,\tau)$ is irreducible 
and non-abelian (Remark~\ref{rem:irred}(b)). 

\begin{defn} \mlabel{def:4.1} If $\g$ is semisimple, then 
$\sigma$ and $\tau$ generate a finite subgroup of $\Aut(\g)$, 
so that \cite[Prop.~13.2.14]{HN12} implies 
that there exists a Cartan involution $\theta$ of $\g$, commuting with 
$\tau$ and $\sigma$. 
We write $\g^\theta = \fk$ and $\g^{-\theta} = \fp$ for the 
eigenspaces of $\theta$. 

The compact Lie algebra $\g^r := \fk \oplus i\fp$ is called 
the {\it compact dual of $\g$}. It carries three commuting involutions 
$\theta^r, \tau^r$ and $\sigma^r$, defined by extending each involution to a
complex linear involution on $\g_\C$ and then restrict to $\g^r$.

\end{defn} 

\begin{lem}
  \mlabel{lem:compdual} 
The triple $(\g^r, \tau^r, \sigma^r)$ is dissecting 
if and only if $(\g,\tau,\sigma)$ is. 
\end{lem}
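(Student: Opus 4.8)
The plan is to pass to the simultaneous eigenspace decomposition of $\g$ under the three commuting involutions $\theta,\tau,\sigma$ and to read off both sides of the dissecting condition from it. For $\varepsilon_1,\varepsilon_2,\varepsilon_3\in\{\pm1\}$ let $\g_{\varepsilon_1\varepsilon_2\varepsilon_3}\subeq\g$ be the subspace on which $\theta$ acts by $\varepsilon_1$, $\tau$ by $\varepsilon_2$, and $\sigma$ by $\varepsilon_3$. Since the three involutions pairwise commute (this is how $\theta$ was chosen in Definition~\ref{def:4.1}), $\g$ is the direct sum of these eight subspaces, with $\fk=\bigoplus_{\varepsilon_2,\varepsilon_3}\g_{+\varepsilon_2\varepsilon_3}$ and $\fp=\bigoplus_{\varepsilon_2,\varepsilon_3}\g_{-\varepsilon_2\varepsilon_3}$; and since $\tau,\sigma$ preserve the $\theta$-eigenspaces $\fk,\fp$,
\[ \g^{-\tau}\cap\g^{-\sigma} \;=\; \g_{+--}\oplus\g_{---}. \]

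First I would check the point that makes the statement meaningful at all: because $\tau$ and $\sigma$, like $\theta$, are defined over $\R$ and commute with $\theta$, they commute with the conjugation of $\g_\C$ whose fixed-point set is $\g^r=\fk\oplus i\fp$, hence preserve $\g^r$, so that $\tau^r,\sigma^r,\theta^r$ really are commuting involutive automorphisms of $\g^r$. Next comes the one genuinely essential (though trivial) observation: $\tau$ and $\sigma$ are extended to $\g_\C$ \emph{complex-linearly}, so multiplication by $i$ does not change their eigenvalues. Hence $i\,\g_{-\varepsilon_2\varepsilon_3}$ is again the simultaneous $(\varepsilon_2,\varepsilon_3)$-eigenspace for $(\tau^r,\sigma^r)$, the eigenspace decomposition of $\g^r$ reads $\g^r=\bigoplus_{\varepsilon_2,\varepsilon_3}\bigl(\g_{+\varepsilon_2\varepsilon_3}\oplus i\,\g_{-\varepsilon_2\varepsilon_3}\bigr)$, and therefore
\[ (\g^r)^{-\tau^r}\cap(\g^r)^{-\sigma^r} \;=\; \g_{+--}\oplus i\,\g_{---}. \]

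To finish, I would simply compare real dimensions: $\dim_\R\bigl(\g_{+--}\oplus i\,\g_{---}\bigr)=\dim_\R\g_{+--}+\dim_\R\g_{---}=\dim_\R\bigl(\g_{+--}\oplus\g_{---}\bigr)$, so one intersection is one-dimensional exactly when the other is, which is the assertion. I do not expect a real obstacle here; the only things needing care are the bookkeeping that $\g^r$ is stable under the extended involutions and the complex-linearity remark above. (Alternatively one could present the argument as an instance of the Cartan-dual invariance of the dissecting property — Remark~\ref{rem:irred}(c) — applied with respect to the third commuting involution $\theta$ in place of $\tau$, since $\g^r$ is precisely the Cartan dual of $(\g,\theta)$; but the direct eigenspace computation is cleaner.)
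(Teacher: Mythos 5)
Your argument is correct and is essentially the paper's own proof: the paper also writes $(\g^r)^{-\tau^r}\cap(\g^r)^{-\sigma^r}=(\fq_\fm\cap\fk)\oplus i(\fq_\fm\cap\fp)$ (your $\g_{+--}\oplus i\,\g_{---}$) and compares it with the $\theta$-invariant decomposition $\fq_\fm=(\fq_\fm\cap\fk)\oplus(\fq_\fm\cap\fp)$, which is exactly your dimension count in slightly different notation.
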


\begin{proof}
First we observe that 
\[ (\g^r)^{-\tau^r} \cap (\g^r)^{-\sigma^r} 
=  (\fq \cap \fk \oplus i (\fq \cap \fp)) \cap 
(\fm \cap \fk \oplus i (\fm \cap \fp))
= (\fq_\fm \cap \fk) \oplus i(\fq_\fm \cap \fp).\] 
As $\fq_\fm$ is $\theta$-invariant, 
$\fq_\fm = (\fq_\fm \cap \fk) \oplus (\fq_\fm \cap \fp)$. 
Hence it is one-dimensional if and only if 
$(\g^r)^{-\tau^r} \cap (\g^r)^{-\sigma^r}$ is one-dimensional.
\end{proof}

We have the following possibilities 
for an irreducible non-abelian symmetric pair 
$(\g,\tau)$ (see \cite[p. 6]{F-J86} for a discussion
and \cite[pp.~9--11]{F-J86} for two commuting involutions): 
\begin{itemize}
\item[\rm (1)] $\fg$ is simple but not complex.\begin{footnote}{
We say that a real simple Lie algebra $\g$ is {\it complex} if 
there exists a complex structure $I$ on $\g$ commuting with $\ad \g$, 
so that $ix := Ix$ turns $\g$ into a complex Lie algebra. Recall 
that all complex simple Lie algebras are also simple as real Lie algebras.} 
  \end{footnote}
\item[\rm (2)] $\g$ is a simple complex Lie algebra and $\tau$ is complex linear.
\item[\rm (3)] $\g$ is a simple complex Lie algebra and $\tau$ is antilinear, 
i.e., a conjugation
with respect to the real form $\fg_1 = \g^\tau$, 
$\fg \cong \g_1 \oplus i\g_1$ and $\tau (x+iy)=x-iy$ for $x,y\in \fg_1$.
\item[\rm (4)] $\fg =\fg_1\oplus \fg_1$ with $\fg_1$ simple and 
$\tau(x,y)=(y,x)$. 
\end{itemize}
In fact, if $\g$ is not simple, then $\tau $ permutes the simple 
ideals of $\g$ in a non-trivial way, and since $\tau$ is an involution, 
irreducibility implies that $\g = \g_1 \oplus \g_2$ for two simple ideals 
satisfying $\tau(\g_1) = \g_2$. Then $\g_2 \cong \g_1$, and 
$(\g,\tau)$ takes the form (4). 

\begin{lem} \mlabel{lem:4.1} The pairs $(\fg,\tau)$ in {\rm(3)}, and the pairs 
$(\g = \g_1 \oplus \g_1, \tau)$ as in~{\rm (4)}, where 
$\g_1$ is simple but not complex, are $c$-dual to each other.
\end{lem}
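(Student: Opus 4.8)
The plan is to exhibit an explicit isomorphism of symmetric pairs between the pair in case (3) and the corresponding pair of type (4), using the standard identification of a complex Lie algebra with a product of two copies of a real form after complexification. Concretely, let $\g_1$ be a real simple Lie algebra which is not complex, and let $\g = \g_1 \oplus \g_1$ with $\tau(x,y) = (y,x)$ as in (4). The $c$-dual of $(\g,\tau)$ is built from $\fh = \g^\tau = \{(x,x) : x \in \g_1\}$, the diagonal, and $\fq = \g^{-\tau} = \{(x,-x) : x \in \g_1\}$, with $\g^c = \fh + i\fq$. I would first identify $\g^c$ explicitly: a general element is $(x,x) + i(y,-y) = (x+iy, x-iy)$ with $x,y \in \g_1$, which is precisely the realification of the complexification $(\g_1)_\C$ under the map $(\g_1)_\C \ni z \mapsto (z, \bar z)$, where the bar denotes the conjugation of $(\g_1)_\C$ fixing $\g_1$. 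Thus $\g^c \cong (\g_1)_\C$ as a real Lie algebra, and since $\g_1$ is simple and non-complex, $(\g_1)_\C$ is simple as a complex Lie algebra, hence $\g^c$ is a simple complex Lie algebra.

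Next I would track what $\tau^c$ becomes under this identification. By definition $\tau^c(u+iv) = u-iv$ for $u \in \fh$, $v \in \fq$; writing an element of $\g^c$ as $(x+iy, x-iy)$ we get $\tau^c$ sends it to $(x-iy, x+iy)$. Under the identification $\g^c \cong (\g_1)_\C$ via $(z,\bar z) \leftrightarrow z$ with $z = x+iy$, this is exactly the conjugation $z \mapsto \bar z$ of $(\g_1)_\C$ with respect to the real form $\g_1$. Hence $(\g^c, \tau^c)$ is a simple complex Lie algebra equipped with an antilinear involution, i.e., it is a pair of type (3), and its fixed-point real form is $\g_1$. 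Conversely, starting from a pair of type (3), namely a simple complex Lie algebra $\fg$ with an antilinear conjugation $\tau$ fixing the real form $\g_1$, one has the well-known isomorphism $\fg \cong \g_1 \oplus i\g_1$ of real Lie algebras (where the right-hand side means $(\g_1)_\C$ viewed as real), and the $c$-dual operation applied to $(\fg,\tau)$ returns, by the same computation run backwards, the pair $(\g_1 \oplus \g_1, \text{flip})$ of type (4). Since the $c$-dual operation is an involution on symmetric pairs (this is immediate from the definition, as $(\g^c)^c \cong \g$ canonically), the two classes correspond bijectively and the statement follows.

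The step requiring the most care is the bookkeeping of the two different $i$'s in play: the formal $i$ used to form the $c$-dual $\fh + i\fq$, and the intrinsic complex structure on the complex Lie algebra $\g^c$. One must check that the map $(x,x) + i(y,-y) \mapsto x + iy \in (\g_1)_\C$ is a Lie algebra homomorphism, which amounts to verifying that the bracket on $\fh + i\fq$ induced by the rule $[u_1 + iv_1, u_2+iv_2] = ([u_1,u_2] - [v_1,v_2]) + i([u_1,v_2] + [v_1,u_2])$ matches the bracket of $(\g_1)_\C$ under this correspondence; this is a short direct computation using $[(a,a),(b,b)] = ([a,b],[a,b])$, $[(a,-a),(b,-b)] = ([a,b],[a,b])$, and $[(a,a),(b,-b)] = ([a,b],-[a,b])$. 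One also notes that $\g_1$ being non-complex is exactly what guarantees that the resulting object lands in case (3) rather than degenerating, and symmetrically that in case (4) we insisted $\g_1$ be simple-but-not-complex. Everything else — simplicity of $(\g_1)_\C$, the involutive nature of $c$-duality — is standard and may be quoted.
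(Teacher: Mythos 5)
Your proof is correct and takes essentially the same route as the paper: the key identification $\g^c = \{(z,\overline z) : z \in \g_{1,\C}\} \cong \g_{1,\C}$, under which $\tau^c$ becomes the conjugation with respect to $\g_1$, is exactly the paper's computation for the pairs of type (4). The only (minor and harmless) difference is in the converse direction, where you invoke the involutivity of $c$-duality together with the first computation, while the paper instead computes the $c$-dual of a type (3) pair directly inside $\g_\C \cong \g_{1,\C}\oplus\g_{1,\C}$ via the embedding $z \mapsto (z,\tau(z))$.
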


\begin{proof} If $(\g,\tau)$ is of type (4), then 
$\g_\C \cong \g_{1,\C} \oplus \g_{1,\C}$. For 
$z = x + i y \in \g_{1,\C}$ we write $\oline z:= x -i y$. 
Then $\g^c=\{(z, \oline z) \: z \in \g_{1,\C}\}$, 
and $\g_{1,\C} \to \g^c, z \mapsto (z,\oline z)$ 
is an isomorphism of real Lie algebras. The Lie algebra $\g^c$ is simple 
if and only if $\g_1$ is not complex. 

If, conversely, $(\g,\tau) =(\g_{1,\C}, \tau)$, where 
$\tau$ is complex conjugation with respect to $\g_1$ as in~(3), 
then the inclusion $\g = \g_{1,\C} \into  \g_{1,\C} \oplus \g_{1,\C}, z 
\mapsto (z, \tau(z))$ extends to an isomorphism of complex 
Lie algebras $\g_\C \to \g_{1,\C} \oplus \g_{1,\C}$. 
Accordingly, 
\[ \g^c = \{ (x,x) + i(iy, -iy) 
= (x-y,x+y) \: x,y \in \g_1 \} \cong \g_1 \oplus \g_1
\quad \mbox{ with } \quad 
\tau^c(z,w) = (w,z).\] 
Since $\g \cong \g_{1,\C}$ is simple, $\g_1$ is not complex. 
\end{proof}

\begin{lem} \mlabel{lem:3.1} 
Let $\g_1$ be a simple real Lie algebra 
and consider on $\g := \g_1 \oplus \g_1$ the flip involution 
$\tau (x,y) = (y,x)$. Then, for any involution 
$\sigma$ of $\g$ commuting with $\tau$, there exists an involutive automorphism 
$\sigma_1$ of $\g_1$ such that either 
\[ \sigma (x,y) = (\sigma_1(x), \sigma_1(y)) \quad \mbox{ or } \quad 
   \sigma (x,y) = (\sigma_1(y), \sigma_1(x))\quad \mbox{ for } \quad x,y \in \g.\] 
\end{lem}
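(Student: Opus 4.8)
The plan is to exploit the uniqueness of the decomposition of a semisimple Lie algebra into simple ideals. Since $\g_1$ is simple, $\g = \g_1 \oplus \g_1$ is semisimple with exactly two simple ideals, namely $\fa_1 := \g_1 \oplus \{0\}$ and $\fa_2 := \{0\} \oplus \g_1$; this is the same structural fact already used above to bring a non-simple irreducible pair $(\g,\tau)$ into the form~(4). Any automorphism of $\g$ maps minimal ideals to minimal ideals, hence permutes the set $\{\fa_1,\fa_2\}$, so $\sigma$ either satisfies $\sigma(\fa_i) = \fa_i$ for $i=1,2$ or satisfies $\sigma(\fa_1) = \fa_2$ and $\sigma(\fa_2) = \fa_1$. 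I will treat these two cases separately; in each case the conclusion drops out of a short comparison of the relations $\sigma^2 = \id$ and $\sigma\tau = \tau\sigma$.

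In the first case, restricting $\sigma$ to the two summands gives automorphisms $\sigma_1, \sigma_2 \in \Aut(\g_1)$ with $\sigma(x,y) = (\sigma_1(x), \sigma_2(y))$ for all $x,y \in \g_1$. Then $\sigma\tau(x,y) = (\sigma_1(y), \sigma_2(x))$ while $\tau\sigma(x,y) = (\sigma_2(y), \sigma_1(x))$, so $\sigma\tau = \tau\sigma$ forces $\sigma_1 = \sigma_2$, and $\sigma^2 = \id$ gives $\sigma_1^2 = \id$; this yields the first alternative. In the second case, $\sigma$ maps $\fa_1$ isomorphically onto $\fa_2$ and $\fa_2$ isomorphically onto $\fa_1$, so there are Lie algebra isomorphisms $\varphi_1, \varphi_2$ of $\g_1$ with $\sigma(x,0) = (0, \varphi_1(x))$ and $\sigma(0,y) = (\varphi_2(y), 0)$, hence $\sigma(x,y) = (\varphi_2(y), \varphi_1(x))$. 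Now $\sigma^2(x,y) = (\varphi_2\varphi_1(x), \varphi_1\varphi_2(y))$, so $\sigma^2 = \id$ gives $\varphi_2 = \varphi_1^{-1}$; and comparing $\sigma\tau(x,y) = (\varphi_2(x), \varphi_1(y))$ with $\tau\sigma(x,y) = (\varphi_1(x), \varphi_2(y))$ gives $\varphi_1 = \varphi_2$. Together these force $\varphi_1^2 = \id$, and with $\sigma_1 := \varphi_1$ we obtain the second alternative $\sigma(x,y) = (\sigma_1(y), \sigma_1(x))$.

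I do not expect a genuine obstacle here: the only input beyond linear bookkeeping is the elementary structural fact that the minimal ideals of $\g_1 \oplus \g_1$ are precisely $\fa_1$ and $\fa_2$ and that every automorphism permutes them, which rests solely on $\g_1$ being simple. Everything else is the routine translation of the identities $\sigma^2 = \id$ and $\sigma\tau = \tau\sigma$ into conditions on the components of $\sigma$, and one should take a moment to note that $\sigma_1$ (resp.\ $\varphi_1$) is indeed an automorphism of $\g_1$ because it is the restriction of $\sigma$ to an invariant (resp.\ the composite of $\sigma$ with the projections identifying the two summands) simple ideal.
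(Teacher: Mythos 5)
Your proof is correct, but it follows a genuinely different route from the paper's. You base everything on the ideal structure of $\g = \g_1 \oplus \g_1$: since $\g_1$ is simple, the only minimal ideals are the two factors $\fa_1 = \g_1\oplus\{0\}$ and $\fa_2 = \{0\}\oplus\g_1$, every automorphism permutes them, and then the relations $\sigma^2=\id$ and $\sigma\tau=\tau\sigma$ reduce to componentwise identities that immediately give the two alternatives (your case analysis and the bookkeeping in both cases check out, including $\sigma_1=\sigma_2$ in the ideal-preserving case and $\varphi_1=\varphi_2=\varphi_1^{-1}$ in the swapping case). The paper argues differently: it only uses that $\sigma$ leaves the diagonal $\g^\tau$ invariant (because $\sigma$ and $\tau$ commute), extracts $\sigma_1$ from the restriction to the diagonal, forms $\tilde\sigma(x,y)=(\sigma_1 x,\sigma_1 y)$, and then studies $\gamma=\sigma\tilde\sigma$, which fixes the diagonal pointwise and acts on the anti-diagonal through a map $c$ commuting with $\ad\g_1$; a short computation gives $c^2=\id$, and simplicity of $\g_1$ (eigenspaces of $c$ are ideals, $[\g_1,\g_1]=\g_1$) forces $c=\pm\id$, i.e.\ $\sigma=\tilde\sigma$ or $\sigma=\tau\tilde\sigma$. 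Your argument is more elementary and arguably more transparent, resting on the standard fact that automorphisms permute the simple ideals of a semisimple Lie algebra (the same fact the paper invokes to reduce the non-simple irreducible case to type~(4)), whereas the paper's Schur-type argument avoids classifying the ideals of the direct sum and works entirely inside the diagonal/anti-diagonal decomposition adapted to $\tau$. Both uses of simplicity are essential and both proofs are complete; yours could be spliced in as a valid alternative.
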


\begin{proof} As $\tau$ and $\sigma$ commute, 
$\sigma$ leaves the diagonal $\{(x,x)\: x\in \fg_1\}=\fg^\tau$ invariant. 
Hence there exists an involutive automorphism $\sigma_1 \in \Aut(\g_1)$ with 
\[ \sigma (x,x) = (\sigma_1(x), \sigma_1(x)) \quad \mbox{ for } \quad x \in \g_1.\] 
Now $\tilde \sigma (x,y)=(\sigma_1x, \sigma_1y)$ defines an involutive automorphism of 
$\g$ commuting with $\tau$. 
We show that either  $\sigma=\tilde \sigma $ or $\sigma=\tau\tilde\sigma$. 

We consider the automorphism 
$\gamma := \sigma\tilde\sigma \in \Aut(\g)$ which commutes with $\tau$ 
and fixes the diagonal $\g^{\tau}$ pointwise. 
Hence $\gamma$ preserves the anti-diagonal and there exists a linear map 
$c \: \g_1 \to \g_1$ with 
\[ \gamma(x,-x) = (c(x),-c(x))\quad \mbox{ for } \quad x \in \g_1.\] 
As $\gamma$ commutes with $\ad (x,x)$ for $x \in \g_1$, 
it follows that 
\[ c([x,y]) = [x,c(y)] = [c(x),y] \quad \mbox{ for } \quad x,y \in \g_1.\] 
We also have 
\begin{align*}
 ([c(x),c(y)], [c(x), c(y)]) 
&= [(c(x),-c(x)), (c(y), -c(y))]
= [\gamma(x,-x), \gamma(y,-y)] \\
&= \gamma([(x,-x),(y,-y)]) 
= \gamma([x,y], [x,y]) = ([x,y],[x,y]),
\end{align*}
so that 
\[ [x,y] = [c(x),c(y)] = c([x,c(y)]) = c^2([x,y]). \] 
As $\g$ is assumed to be simple, and hence $[\fg_1,\fg_1]=\fg_1$, it follows that 
$c^2 = \id_{\g_1}$. 

Since $c$ commutes with $\ad \g_1$, the eigenspaces 
$\ker(c \pm \1)$ are ideals of the simple Lie algebra~$\g_1$, so that 
$c \in \{\pm \id_{\g_1}\}$. We conclude that either 
$c = 1$, i.e., $\sigma = \tilde\sigma$, or 
$c = -1$, which means that $\sigma= \tau \tilde\sigma$. 
\end{proof}

 The triple $(\g,\tau, \sigma)$ is dissecting if and only if 
$\dim \fq_\fm=1$. Let $0 \not= x_0\in \fq_\fm$. 
Recall that $x\in \fg$ is said to be \textit{elliptic} 
if
$\ad x$ is semisimple with purely imaginary eigenvalues, 
and \textit{hyperbolic} if $\ad x$ is diagonalizable over $\R$. 

\begin{lemma}  \mlabel{lem:sigmatau1} 
For an irreducible non-abelian dissecting triple 
$(\fg,\tau ,\sigma)$, we have: 
\begin{itemize}
\item[\rm(i)] $\fg^{\sigma\tau}=\fh_\fl\oplus \R x_0$, $\fh_\fl$ 
is an ideal of $\fg^{\sigma\tau}$  and $x_0$ is central in $\g^{\sigma\tau}$. 
\item[\rm(ii)]   $x_0$ is either elliptic or hyperbolic.
\end{itemize}
\end{lemma}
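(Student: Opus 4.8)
The plan is to read everything off from the $\{\pm 1\}\times\{\pm 1\}$-grading
\[ \fg = \fh_\fl\oplus\fh_\fm\oplus\fq_\fl\oplus\fq_\fm \]
given by the pair of eigenvalues of the commuting involutions $(\tau,\sigma)$, together with semisimplicity of $\fg$, which is available since $(\fg,\tau)$ is irreducible and non-abelian (Remark~\ref{rem:irred}(b)). First I would observe that $\fg^{\sigma\tau}$ is exactly the sum of those eigenspaces on which $\tau$ and $\sigma$ coincide, i.e.\ $\fg^{\sigma\tau} = \fh_\fl\oplus\fq_\fm = \fh_\fl\oplus\R x_0$, the last equality being the dissecting hypothesis $\dim\fq_\fm=1$. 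The grading gives $[\fh_\fl,\fh_\fl]\subseteq\fh_\fl$, $[\fh_\fl,\fq_\fm]\subseteq\fq_\fm$ and $[\fq_\fm,\fq_\fm]\subseteq\fh_\fl$; since $\fq_\fm$ is a line, $[x_0,x_0]=0$ and there is a linear functional $\alpha\:\fh_\fl\to\R$ with $[h,x_0]=\alpha(h)x_0$. Because $\fh_\fl\cap\R x_0=\{0\}$, the two remaining assertions of (i) --- that $\fh_\fl$ is an ideal of $\fg^{\sigma\tau}$ and that $x_0$ is central in $\fg^{\sigma\tau}$ --- are both equivalent to $\alpha=0$, so (i) reduces to showing $\alpha=0$.

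To obtain $\alpha=0$ I would bring in the Killing form $\kappa$ of the semisimple Lie algebra $\fg$. Invariance of $\kappa$ under $\tau$ and $\sigma$ forces the four graded summands to be mutually $\kappa$-orthogonal, so $\kappa$ restricts non-degenerately to each of them; in particular $\kappa(x_0,x_0)\neq 0$. Then associativity of $\kappa$ together with $[x_0,x_0]=0$ yields
\[ \alpha(h)\,\kappa(x_0,x_0) = \kappa\big([h,x_0],x_0\big) = \kappa\big(h,[x_0,x_0]\big) = 0 \]
for all $h\in\fh_\fl$, hence $\alpha=0$ and (i) follows.

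For (ii) the plan is to invoke a Cartan involution $\theta$ of $\fg$ commuting with $\tau$ and $\sigma$, whose existence is recorded in Definition~\ref{def:4.1}. Since $\theta$ then commutes with $\sigma\tau$ and preserves every joint eigenspace of $(\tau,\sigma)$, it preserves the line $\R x_0$, so $\theta x_0=\pm x_0$. If $\theta x_0=x_0$, then $x_0\in\fk=\fg^\theta$ and $\ad x_0$ is skew-symmetric for the positive definite form $-\kappa(\,\cdot\,,\theta\,\cdot\,)$, hence semisimple with purely imaginary spectrum, so $x_0$ is elliptic; if $\theta x_0=-x_0$, then $x_0\in\fp=\fg^{-\theta}$ and $\ad x_0$ is symmetric for the same form, hence $\R$-diagonalizable, so $x_0$ is hyperbolic.

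I do not expect a serious obstacle here: the substance is the bookkeeping of the bigrading in the first step, and the two inputs that do the real work --- semisimplicity of $\fg$ (Remark~\ref{rem:irred}(b)) and a Cartan involution commuting with $\tau$ and $\sigma$ (Definition~\ref{def:4.1}) --- are already available. The one place to be slightly careful is the non-degeneracy of $\kappa$ on the line $\R x_0$, which is why I would isolate the $\kappa$-orthogonality of the four graded pieces before running the invariance computation.
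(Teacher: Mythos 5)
Your proof is correct, but it follows a genuinely different route from the paper in both parts. For (i), the paper notes that $[\fh_\fl,\fq_\fm]\subseteq\fq_\fm$ makes $\R x_0$ an ideal of $\fg^{\sigma\tau}$, invokes the fact that $\fg^{\sigma\tau}$ is reductive in $\fg$ (via a Cartan involution commuting with $\sigma\tau$, citing Warner), and uses that one-dimensional ideals of a reductive Lie algebra are central; you instead exploit the $\kappa$-orthogonality of the four joint eigenspaces of $(\tau,\sigma)$ to get $\kappa(x_0,x_0)\neq 0$ and kill the functional $\alpha$ by invariance of the Killing form together with $[x_0,x_0]=0$ --- a more self-contained argument that avoids the reductivity input altogether. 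For (ii), the paper runs the Jordan decomposition $x_0=x_s+x_n$, uses $\sigma$- and $\tau$-equivariance and $\dim\fq_\fm=1$ to force $x_0\in\{x_s,x_n\}$, rules out the nilpotent case by decomposing $\fg$ into simple $\fg^{\sigma\tau}$-modules (a central nilpotent element would act trivially), and then splits $x_s=x_e+x_h$ into elliptic and hyperbolic parts; you instead use the commuting Cartan involution $\theta$ of Definition~\ref{def:4.1}, which preserves the line $\R x_0$, so $\theta x_0=\pm x_0$ and $\ad x_0$ is skew-symmetric or symmetric for the inner product $-\kappa(\cdot,\theta\cdot)$, giving ellipticity or hyperbolicity directly. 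Your version is cleaner and avoids the Jordan-decomposition machinery, at the cost of making the commuting Cartan involution (rather than reductivity of $\fg^{\sigma\tau}$) the essential structural input; both inputs are legitimately available in the paper before the lemma, and your use of Remark~\ref{rem:irred}(b) to secure semisimplicity is exactly what is needed.
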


Note that, if $x_0 \in \fq$ is elliptic, then $ix_0 \in \g^c 
= \fh \oplus i \fq$ is hyperbolic. 

\begin{proof}
(i) We have 
$\fg^{\sigma\tau}= \fh_\fl \oplus \fq_\fm=\fh_\fl \oplus \R x_0$. Hence
$[\fg^{\sigma\tau},\R x_0]= [\fh_\fl ,\fq_\fm] \subseteq \fq_\fm=\R x_0,$ 
so that $\R x_0$ is an ideal in $\fg^{\sigma\tau}$.
As $\fg^{\sigma \tau}$ is reductive,\begin{footnote}
{For any involution $\tau$ 
of a semisimple Lie algebra $\g$, the fixed point algebra 
is reductive in $\g$. Since there exists a Cartan involution 
$\theta$ commuting with $\tau$, this follows from 
\cite[Cor.~1.1.5.4]{Wa72}. }\end{footnote}
all its one-dimensional ideals are central. Therefore 
$x_0$ is central and thus $\fh_\fl$ also is an ideal. 

\nin (ii) According to \cite[Prop.~1.3.5.1]{Wa72}, 
 $x_0$ has a Jordan decomposition $x_0 =x_s+x_n$,  where
$x_s\in \g$ is semisimple and $x_n\in \g$ is nilpotent. 
Applying $\sigma$, the uniqueness of the Jordan decomposition 
and 
\[ x_s + x_n = x = - \sigma(x) = - \sigma(x_s) - \sigma(x_n)\] 
imply that $\sigma(x_s) = -x_s$ and
$\sigma (x_n)=-x_n$. We likewise obtain $\tau(x_s) = - x_s$
and $\tau (x_n)=-x_n$. As $\dim \fq_\fm = 1$,  we have
$x_0=x_s$ or $x_0=x_n$. 

As  $\fg^{\sigma\tau}$ is reductive in $\fg$, the Lie algebra 
decomposes into simple $\g^{\sigma\tau}$-submodules. If the central 
element $x_0\in \g^{\sigma\tau}$ is nilpotent, 
it acts trivially on all these simple submodules, so that 
$\ad x_0 = 0$. As $\g$ is semisimple and $x_0 \not=0$, 
this cannot happen, and we conclude 
that $x_0$ is semisimple. 

Now we decompose $x_0=x_e+x_h$ where $x_e$ and $x_h$ commute,
$x_e$ is elliptic and $x_h$ is hyperbolic. 
The uniqueness of the decomposition implies as above that 
$x_e, x_h \in \fq_\fm$, hence that $x_0$ is either elliptic or hyperbolic. 
\end{proof}

Let us now reduce the classification to the case where $\fg$ is simple. 
This is achieved by the following lemma, which reduces the case where
$\g$ is not simple to $\su_2(\C) \cong \so_3(\R)$ and $\fsl_2(\R) \cong \so_{1,2}(\R)$ 
which is discussed in Examples~\ref{ex:sl2} and~\ref{ex:su2}. 
We first take a closer look at these two Lie algebras. 

\begin{remark} \mlabel{rem:4.7}
The involutions on $\su_2(\C)$ and $\fsl_2(\R)$ 
are determined by their one-dimensional fixed point algebras 
$\g_1^{\sigma_1} = \R z_0$, where 
$z_0$ can be either elliptic of hyperbolic for $\fsl_2(\R)$,  
and $z_0$ is elliptic for $\su_2(\C)\cong \so_3(\R)$. 
As $\Aut(\su_2(\C)) = \Aut(\so_3(\R)) \cong \SO_3(\R)$, involutive 
automorphisms correspond to non-trivial involutions $\sigma \in \SO_3(\R)$. 
As $\det \sigma = 1$, these reflections  have 
$1$-dimensional fixed point spaces, hence are mutually conjugate, 
so that there is only one equivalence class for $\su_2(\C)$. 
We also observe that complex conjugation on $\su_2(\C)$ coincides 
with $x\mapsto -x^\top$ and with conjugation by 
$\begin{pmatrix} 0 & 1\\ -1 & 0\end{pmatrix}$.  
\end{remark}

\begin{lem} \mlabel{lem:4.4} Assume that $(\g,\tau,\sigma)$ is an irreducible 
non-abelian dissecting triple. If $\g$ is not simple, then 
$(\g,\tau,\sigma)$ is equivalent to 
\[ (\g_1 \oplus \g_1, \tau,\sigma) \quad \mbox{ with } \quad 
\tau (x,y) = (y,x),\quad \sigma (x,y) = (\sigma_1(y), \sigma_1(x)),\] 
where either 
\begin{itemize}
\item[\rm(a)] $\g_1 = \su_2(\C) \simeq \so_{3}(\R)$ and $\sigma_1(x) = \oline x$.  
\item[\rm(b)] $\g_1 = \fsl_2(\R)\simeq \so_{2,1}(\R)$ and 
$\sigma_1(x) = - x^\top$ or $\sigma_1(x) = \rI_{1,1} x \rI_{1,1}.$
\end{itemize}
\end{lem}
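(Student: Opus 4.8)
The plan is to reduce the problem to the structure of rank-one simple Lie algebras. Since $(\g,\tau)$ is irreducible, non-abelian and $\g$ is not simple, the reasoning recorded just before Lemma~\ref{lem:4.1} shows that $(\g,\tau)$ is of type~(4), i.e.\ $\g=\g_1\oplus\g_1$ with $\g_1$ simple and $\tau(x,y)=(y,x)$. By Lemma~\ref{lem:3.1} there is an involutive $\sigma_1\in\Aut(\g_1)$ with $\sigma(x,y)=(\sigma_1 x,\sigma_1 y)$ or $\sigma(x,y)=(\sigma_1 y,\sigma_1 x)$. I would first rule out the first alternative: with $\fq_1:=\g_1^{-\sigma_1}$ one computes $\g^{-\tau}\cap\g^{-\sigma}=\{(x,-x):x\in\fq_1\}\cong\fq_1$, so the dissecting condition would force $\dim\fq_1=1$; but then $[\fq_1,\fq_1]=0$, hence for $\fq_1=\R z$ we get $[\g_1,z]=[\g_1^{\sigma_1},z]\subseteq\fq_1=\R z$, making $\R z$ a one-dimensional ideal of the simple Lie algebra $\g_1$, which is absurd. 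Therefore $\sigma(x,y)=(\sigma_1 y,\sigma_1 x)$, and the analogous computation gives $\g^{-\tau}\cap\g^{-\sigma}\cong\g_1^{\sigma_1}$, so $(\g,\tau,\sigma)$ is dissecting precisely when $\dim\g_1^{\sigma_1}=1$.

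The task then becomes the following claim, which is the heart of the argument: \emph{a simple real Lie algebra $\g_1$ admitting an involution $\sigma_1$ with one-dimensional fixed algebra $\fh_1=\R z_0$ is three-dimensional.} Since $\fh_1$ is reductive in $\g_1$ and one-dimensional, $\ad z_0$ is semisimple; put $\fq_1:=\g_1^{-\sigma_1}$, so that $\g_1=\R z_0\oplus\fq_1$. The Killing form $\kappa$ of $\g_1$ satisfies $\kappa(\fh_1,\fq_1)=0$ and is nondegenerate on each summand, and $T:=\ad(z_0)\vert_{\fq_1}$ is $\kappa$-skew. As $[\fq_1,\fq_1]\subseteq\R z_0$ I can write $[u,v]=f(u,v)z_0$, and invariance of $\kappa$ gives $f(u,v)=-\kappa(z_0,z_0)^{-1}\kappa(u,Tv)$ (with $\kappa(z_0,z_0)\neq0$). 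Applying the Jacobi identity to $u,v,w\in\fq_1$ and specialising $w$ to a fixed vector $w_0$ with $Tw_0\neq0$ yields $f(u,v)\,Tw_0=\ell(v)Tu-\ell(u)Tv$, where $\ell:=f(w_0,\cdot)$ is a nonzero functional; restricting $u$ to the hyperplane $\ker\ell$ forces $T(\ker\ell)\subseteq\R Tw_0$, so $\rank T\le 2$, and since the alternating form $(u,v)\mapsto\kappa(u,Tv)$ has even rank and $T\neq0$, in fact $\rank T=2$, i.e.\ $\dim\ker T=\dim\g_1-3$. Moreover $f$ vanishes on $\ker T$, so $\ker T$ is abelian and $\g_1^{z_0}:=\ker(\ad z_0)=\R z_0\oplus\ker T$ is an abelian subalgebra. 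Passing to $\g_{1,\C}$, the abelian centralizer $(\g_1^{z_0})_\C$ of the semisimple element $z_0$ contains, hence equals, a Cartan subalgebra of $\g_{1,\C}$; thus the rank of $\g_{1,\C}$ equals $\dim\g_1^{z_0}=\dim\g_1-2$, and the number of roots of $\g_{1,\C}$ is $\dim\g_1-(\dim\g_1-2)=2$. A complex semisimple Lie algebra of rank $r$ has at least $2r$ roots, so $r=1$ and $\dim\g_1=3$.

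It then remains to recognise the three-dimensional simple real Lie algebras and their involutions with one-dimensional fixed algebra: $\g_1$ is $\su_2(\C)\cong\so_3(\R)$ or $\fsl_2(\R)\cong\so_{2,1}(\R)$, and by Remark~\ref{rem:4.7} such a $\sigma_1$ is, up to conjugation, $x\mapsto\oline x$ in the first case, and one of $x\mapsto-x^\top$ or $x\mapsto\rI_{1,1}x\rI_{1,1}$ in the second; each of these does have a one-dimensional fixed algebra. Conversely, every resulting triple $(\g_1\oplus\g_1,\tau,\sigma)$ with $\tau(x,y)=(y,x)$ and $\sigma(x,y)=(\sigma_1 y,\sigma_1 x)$ is dissecting by the first paragraph. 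This yields exactly the cases (a) and (b).

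The step I expect to be the main obstacle is the middle paragraph: squeezing the bound $\rank T\le 2$ (equivalently, that the centralizer of $z_0$ has codimension $2$ in $\g_1$) out of the Jacobi identity together with $\dim[\fq_1,\fq_1]\le 1$, and then upgrading ``$\g_1^{z_0}$ is abelian'' to ``$\g_1^{z_0}$ is a Cartan subalgebra'' so that the root count pins down $\dim\g_1=3$. The remaining ingredients --- the reduction via Lemma~\ref{lem:3.1}, the identification of the eigenspace intersections, and the bookkeeping for $\so_3(\R)$ and $\so_{2,1}(\R)$ --- are routine.
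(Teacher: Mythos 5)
Your proof is correct, and for the decisive step it takes a genuinely different route from the paper. The reduction is the same: both arguments invoke Lemma~\ref{lem:3.1}, rule out $\sigma(x,y)=(\sigma_1 x,\sigma_1 y)$ by the identical one-dimensional-ideal contradiction, and thus reduce to classifying simple $\g_1$ with $\dim\g_1^{\sigma_1}=1$. At that point the paper splits into two cases according to whether $\ad y_0$ acts irreducibly on $\g_1^{-\sigma_1}$: the irreducible case gives $\dim\g_1^{-\sigma_1}\le 2$ at once, and the reducible case invokes the $3$-grading result \cite[Lem.~1.3.4]{HO97} to get $\g_1=V_{-1}\oplus\R y_0\oplus V_1$ with $V_{\pm1}$ one-dimensional; this case split also records whether $y_0$ is elliptic or hyperbolic. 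You instead argue directly with the Killing form and the Jacobi identity that $\rank\big(\ad(z_0)\vert_{\fq_1}\big)=2$ and that $\fz_{\g_1}(z_0)=\R z_0\oplus\ker T$ is abelian, so that its complexification is a Cartan subalgebra of $\g_{1,\C}$ of codimension $2$, and the root count forces rank $1$, hence $\dim\g_1=3$; this is self-contained (no appeal to \cite{HO97}), avoids the case distinction, and works uniformly even if $\g_1$ were complex, at the price of losing the elliptic/hyperbolic information that the paper's Case 1/Case 2 provides for free (you recover the real forms via Remark~\ref{rem:4.7}, as does the paper). Two trivialities you leave implicit and should state: $T\neq 0$ (otherwise $z_0$ would be central in the simple algebra $\g_1$), which is needed both for the existence of $w_0$ with $Tw_0\neq0$ and for upgrading $\rank T\le 2$ to $\rank T=2$; and $\ell\neq0$, which follows since $\ell(v)=\kappa(z_0,z_0)^{-1}\kappa(Tw_0,v)$ up to sign and $\kappa$ is nondegenerate on $\fq_1$. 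With these remarks added, your argument is complete, and your semisimplicity input for $\ad z_0$ (reductivity of $\g_1^{\sigma_1}$ in $\g_1$) is exactly the fact the paper itself uses in Lemma~\ref{lem:sigmatau1}.
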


\begin{proof} Since $\fg$ is not simple, 
$(\fg,\tau)$ is equivalent to a pair of 
the form $\fg =\fg_1\oplus \fg_1$ with $\fg_1$ simple and 
$\tau (x,y)= (y,x)$.  Lemma~\ref{lem:3.1} 
shows that there exists an involutive automorphism 
$\sigma_1$ of $\g_1$ such that either 
\[ \sigma (x,y) = (\sigma _1(x), \sigma_1(y)) \quad \mbox{ or } \quad 
   \sigma (x,y) = (\sigma_1(y), \sigma_1(x))\quad \mbox{ for } \quad x,y \in \g.\] 

Let $\widetilde{\sigma}(x,y)=(\sigma_1(x),\sigma_1(y))$ be as in the
proof of Lemma \ref{lem:3.1}. We then have 
\[ \fq_\fm =
\begin{cases}
\{(x,-x)\: \sigma_1 x=-x\} \cong \g_1^{-\sigma_1} & \text{ for } \quad 
\sigma = \tilde\sigma \\  
\{(x,-x)\: \sigma_1 x=x\} \cong \g_1^{\sigma_1} & \text{ for } \quad 
\sigma = \tau \tilde\sigma.
\end{cases}\]
Write $x_0 = (y_0,-y_0)$. In the first case 
$\fg_1^{-\sigma_1}=\R y_0$ is 
one dimensional. Hence $\fg_1=\fg_1^{\sigma_1}\oplus \R y_0$ and 
\[ [\fg_1,y_0] = [\fg_1^{\sigma_1},y_0]\subseteq \R y_0 . \]
It follows that $\R y_0$ is an ideal, contradicting the assumption that $\fg_1 $ is simple. Hence we are in the second case where 
$\sigma = \tau \tilde\sigma$ 
and $\fq_\fm \cong \fg_1^{\sigma_1} = \R y_0$ is one-dimensional. 
It therefore remains to determine the symmetric pairs 
$(\g_1,\sigma_1)$, where $\g_1$ is simple and $\g_1^{\sigma_1}$ is one-dimensional. 

\nin{\bf Case 1:} $\fg_1^{-\sigma_1}$ is 
an irreducible module of $\R y_0 =\g_1^{\sigma_1}$. 
Then $\dim \fg_1^{-\sigma_1} \leq 2$, and since $\dim \g_1 \geq 3$, 
it follows that $\g_1$ is $3$-dimensional, 
hence isomorphic to $\su_2(\C)$ or $\fsl_2(\R)$. 
Up to equivalence, there are only two  equivalence classes of 
three-dimensional symmetric Lie algebras $(\g_1, \tau_1)$ 
for which $\ad y_0$ is irreducible on $\g_1^{-\sigma_1}$, 
one for $\g_1 \cong \su_2(\C)$ and one for 
$\g_1 \cong \fsl_2(\R)$ (Remark~\ref{rem:4.7}). Here $y_0$ is elliptic. 

\nin{\bf Case 2:} $\fg_1^{-\sigma_1}$ is not irreducible under 
$\ad y_0$. By \cite[Lem.~1.3.4, p.~13]{HO97}, 
$\g_1$ is $3$-graded, $\g_1 =V_{-1} \oplus V_0 \oplus V_1$, 
where $V_0 = \g_1^{\sigma_1} = \R y_0$ 
and $V_{\pm 1}$ are irreducible $\g^{\sigma_1}$-submodules of 
$\g_1^{-\sigma_1}$ which are abelian Lie algebras.
Then $D v_j := j v_j$ for $v_j \in V_j$ defines a derivation 
of $\g_1$. As $\g_1$ is semisimple, $D = \ad h$ for some $h \in \g_1$. 
Since $D$ commutes with $\sigma_1$, we have $\sigma_1(h) = h$. 
Therefore $h \in  \R y_0$,  and in particular $y_0$ is hyperbolic. 
Irreducibility of $V_{\pm 1}$ thus implies 
$\dim V_1=\dim V_{-1}=1$. 
Hence $\g_1$ is a $3$-dimensional non-compact simple Lie algebra, 
so that  $\fg_1\simeq \lsl_2(\R )$. 
\end{proof}

We next discuss the case where $\g$ is complex simple. 

\begin{corollary} \mlabel{cor:4.9} If the triple $(\g,\tau,\sigma )$ is dissecting 
and $\fg$ is complex simple, 
then $\g \simeq \lsl_2(\C)\simeq \so_{3,1}(\R)$ and
$\tau$ is the conjugation with respect to $\su_2(\C)\simeq \so_3(\R)$
or $\lsl_2(\R)\simeq \so_{2,1}(\R)$.  In this case  $\g^r \cong \su_2(\C) \oplus 
\su_2(\C) \cong \so_3(\R) \oplus \so_3(\R) \cong \so_4(\R)$. 
\end{corollary}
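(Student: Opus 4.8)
The plan is to distinguish two cases according to the action of $\tau$ on the complex structure of $\g$. Recall that every automorphism of a complex simple Lie algebra, regarded as a real Lie algebra, is either complex linear or conjugate linear; in particular $(\g,\tau)$ is of type~(2) or~(3) in the list preceding Lemma~\ref{lem:4.1}, and the same dichotomy applies to $\sigma$. Note also that $\tau\neq\id$, since otherwise $\g^{-\tau}=\{0\}$ and the dissecting condition could not hold; as $\g$ is simple, $(\g,\tau)$ is therefore irreducible and $\g$ is non-abelian, so that the reductions used below and Lemma~\ref{lem:4.4} are available.

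I would first rule out type~(2). Here $\fq=\g^{-\tau}$ is a nonzero complex subspace of $\g$, and it is $\sigma$-invariant because $\sigma$ and $\tau$ commute. If $\sigma$ is complex linear as well, then $\g^{-\tau}\cap\g^{-\sigma}=\fq\cap\g^{-\sigma}$ is a complex subspace, hence of even real dimension, so it cannot be one-dimensional and the triple is not dissecting. If instead $\sigma$ is conjugate linear, then $\sigma\res_\fq$ is an antilinear involution of the complex vector space $\fq$, so that $\dim_\R(\g^{-\tau}\cap\g^{-\sigma})=\dim_\R\fq^{-\sigma}=\dim_\C\fq$; were the triple dissecting, this would force $\dim_\C\fq=1$, whence $[\fq,\fq]=\{0\}$, and together with $[\fh,\fq]\subeq\fq$ this makes $\fq$ a one-dimensional complex ideal of the simple Lie algebra~$\g$ --- impossible, since $\dim_\C\g\geq 3$. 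Thus type~(2) produces no dissecting triples.

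It remains to treat type~(3): $\g$ is complex simple and $\tau$ is the conjugation with respect to the real form $\g_1:=\g^\tau$, so $\g\cong(\g_1)_\C$ and, as in the proof of Lemma~\ref{lem:4.1}, $\g_1$ is simple but not complex. By Lemma~\ref{lem:4.1} the pair $(\g,\tau)$ is Cartan dual to a pair of type~(4), namely $(\g_1\oplus\g_1,\tau^c)$ with $\tau^c(x,y)=(y,x)$; by Remark~\ref{rem:irred}(c) the triple $(\g,\tau,\sigma)$ is dissecting if and only if its Cartan dual $(\g_1\oplus\g_1,\tau^c,\sigma^c)$ is, and by Remark~\ref{rem:irred}(d) the latter is again irreducible, and non-abelian since $\g_1$ is simple. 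Lemma~\ref{lem:4.4} then gives $\g_1\cong\su_2(\C)\cong\so_3(\R)$ or $\g_1\cong\fsl_2(\R)\cong\so_{2,1}(\R)$, so in either case $\g\cong(\g_1)_\C\cong\fsl_2(\C)\cong\so_{3,1}(\R)$ and $\tau$ is the conjugation with respect to $\su_2(\C)\cong\so_3(\R)$ or $\fsl_2(\R)\cong\so_{2,1}(\R)$, as claimed. Finally, $\g^r$ is the compact dual of $\so_{3,1}(\R)$, obtained by multiplying the $(-1)$-eigenspace of a Cartan involution by $i$; this is $\so_4(\R)\cong\su_2(\C)\oplus\su_2(\C)$.

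The only step that requires a genuine observation is the conjugate-linear sub-case of type~(2), where one must recognise that a one-dimensional $\fq$ is automatically an ideal and hence cannot occur for simple $\g$. The rest is routine bookkeeping with the Cartan dual (Lemma~\ref{lem:4.1} and Remark~\ref{rem:irred}) on top of Lemma~\ref{lem:4.4}, which already classifies the non-simple case, so I do not anticipate a serious obstacle.
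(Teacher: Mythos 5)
Your proof is correct and follows essentially the same route as the paper: reduce via Cartan duality (Lemma \ref{lem:4.1} together with Remark \ref{rem:irred}(c),(d)) to the non-simple case, settle that case with Lemma \ref{lem:4.4}, and then identify $\g \cong \fsl_2(\C) \cong \so_{3,1}(\R)$ and $\g^r \cong \su_2(\C)\oplus\su_2(\C)\cong\so_4(\R)$.

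The one place you diverge is in excluding a complex linear $\tau$. The paper only observes that $\tau$ and $\sigma$ cannot both be complex linear (a complex subspace has even real dimension) and then invokes the symmetry of the dissecting condition in $\tau$ and $\sigma$ to ``assume'' $\tau$ antilinear; you instead rule out the mixed case ($\tau$ complex linear, $\sigma$ antilinear) directly, noting that it would force $\dim_\C \fq = 1$ and hence make $\fq = \g^{-\tau}$ a proper nonzero ideal of the simple algebra $\g$ (since $[\fq,\fq]=\{0\}$ by $\C$-bilinearity of the bracket and $[\fh,\fq]\subeq\fq$). This small extra argument buys something concrete: it proves literally that $\tau$ itself is the conjugation, whereas the paper's WLOG swap, taken at face value, yields the conclusion only up to exchanging $\tau$ and $\sigma$. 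Apart from this refinement, the two proofs coincide, and your handling of the dual triple (irreducibility, non-abelianness, and the dissecting property transferring to $(\g_1\oplus\g_1,\tau^c,\sigma^c)$ before applying Lemma \ref{lem:4.4}) matches the paper's.
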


\begin{proof} Since $\g^{-\tau} \cap \g^{-\sigma}$ is one-dimensional real, 
either $\tau$ or $\sigma$ is not complex linear. Since 
$(\g,\sigma,\tau)$ is also dissecting, we may 
assume that $\tau$ is antilinear. 
Then $\g_1 := \g^\tau$ is a real form of $\g$ and $\g \cong \g_{1,\C}$. 
By Lemma~\ref{lem:4.1}, $\g^c := \g^\tau \oplus i \g^{-\tau} 
\cong \g_1 \oplus \g_1$ 
with $\tau^c(z,w) = (w,z)$, 
and $(\g^c, \tau^c, \sigma^c)$ is dissecting by Remark~\ref{rem:irred}(c). 
Further, $(\g^c,\tau^c)$ is irreducible (Remark~\ref{rem:irred}(d)), so that 
Lemma~\ref{lem:4.4} implies that $\g \cong \g_{1,\C} 
\cong \fsl_2(\C) \cong \so_{1,3}(\R)$, 
where $\tau$ corresponds to the involution on $\so_{1,3}(\R)$ 
with fixed point algebra $\so_3(\R)$ or $\so_{1,2}(\R)$, 
the two real forms of~$\g$. The compact dual of $\g = \fsl_2(\C)$ 
is $\g^r = \su_2(\C) \oplus \su_2(\C)
\cong \so_3(\R) \oplus \so_3(\R) \cong \so_4(\R)$ 
(Lemma~\ref{lem:4.1}). 
\end{proof}

\begin{rem} \mlabel{rem:grsimp} 
By Lemma~\ref{lem:4.1}, the 
compact dual  $\g^r$ (Definition~\ref{def:4.1}) is not simple if and only if 
$\g$ is a complex simple Lie algebra. 
\end{rem}

With Lemma~\ref{lem:4.4} and Corollary~\ref{cor:4.9}, 
we have largely reduced our problem to the 
case where $\g$ is simple and not complex, so that the 
compact dual $\g^r$ is simple (Remark~\ref{rem:grsimp}). 
The final step consists in an inspection of the Riemannian case. 
We recall from Example~\ref{ex:pn} the reflection 
$r_j(x) = x - 2 x_j e_j$ in the hyperplane $e_j^\bot \subeq \R^n$. 

\begin{thm} \mlabel{thm:6.6}
Let $(\g,\tau,\sigma)$ be an irreducible non-abelian dissecting triple such that 
$(\g,\tau)$ is Riemannian with $\dim (\g/\fh) = n$. 
Then $(\g,\tau,\sigma)$ is equivalent to one of the following types: 
\begin{itemize}
\item[\rm(C)] $\g \cong \so_{n+1}(\R)$ with $\tau(x) = r_1 x r_1$ and 
$\sigma(x) = r_{n+1} x r_{n+1}$. 
\item[\rm(NC)] $\g \cong \so_{1,n}(\R)$ with $\tau(x) = r_1 x r_1$ and 
$\sigma(x) = r_{n+1} x r_{n+1}$. 
\end{itemize}
\end{thm}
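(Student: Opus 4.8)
The plan is to reduce to the case where $\g$ is compact and simple, reinterpret the dissecting condition as a statement about Hermitian symmetric spaces, and then run through Cartan's list.

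First I would dispose of the cases already handled in the excerpt. If $\g$ is not simple, Lemma~\ref{lem:4.4} applies, and under the Riemannian hypothesis only its outcome (a) can occur: in (b) the pair $(\g,\tau)=(\fsl_2(\R)\oplus\fsl_2(\R),\ \text{flip})$ has $\h=\Delta\fsl_2(\R)$, which is not compactly embedded, so $(\g,\tau)$ is not Riemannian. One then checks directly that the triple in (a) is equivalent to type~(C) with $n=3$, using $\su_2(\C)\oplus\su_2(\C)\cong\so_4(\R)$ and Remark~\ref{rem:4.7}. If $\g$ is complex simple, Corollary~\ref{cor:4.9} gives $\g\cong\fsl_2(\C)\cong\so_{1,3}(\R)$, and the Riemannian hypothesis forces $\tau$ to be conjugation with respect to the compact real form $\su_2(\C)\cong\so_3(\R)$, which is type~(NC) with $n=3$. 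If $\g$ is simple and not complex and $(\g,\tau)$ is Riemannian of noncompact type (so $\tau$ is a Cartan involution), I pass to the Cartan dual triple $(\g^c,\tau^c,\sigma^c)$: it is again irreducible and dissecting (Remark~\ref{rem:irred}(c),(d)), $\g^c=\g^r$ is compact and simple (Remark~\ref{rem:grsimp}, as $\g$ is not complex), and $(\g^c,\tau^c)$ is Riemannian of compact type; once type~(C) is proved for it, dualizing back turns $(\so_{n+1}(\R),\Ad(r_1),\Ad(r_{n+1}))$ into $(\so_{1,n}(\R),\Ad(r_1),\Ad(r_{n+1}))$, i.e.\ type~(NC). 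Thus it remains to treat $\g$ compact and simple.

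For that case, write $\eta:=\tau\sigma$ and pick $0\neq x_0\in\q_\fm=\g^{-\tau}\cap\g^{-\sigma}$. By Lemma~\ref{lem:sigmatau1}, $x_0$ is central in $\g^\eta=\h_\fl\oplus\R x_0$. Since $\g$ is simple and $Z(\g^\eta)\neq\{0\}$, the pair $(\g,\eta)$ is an irreducible Hermitian symmetric pair: $Z(\g^\eta)=\R x_0$ is one-dimensional, $\fs:=[\g^\eta,\g^\eta]$ is semisimple with $\g^\eta=\fs\oplus\R x_0$, and $x_0$ generates the invariant complex structure of $G/G^\eta$. Both $\tau$ and $\sigma$ negate $x_0$, hence are anti-holomorphic, and commute with $\eta$; moreover $\fs=\h\cap\fl=\h_\fl\subseteq\h=\g^\tau$, so $\tau$ fixes $\fs$ pointwise. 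Conversely, for any irreducible compact Hermitian symmetric pair $(\g,\eta)$ and any anti-holomorphic involution $\tau$ commuting with $\eta$, one has $\g^{-\tau}\cap\g^{-(\tau\eta)}=\fs^{-\tau}\oplus\R x_0$, so the triple $(\g,\tau,\tau\eta)$ is dissecting if and only if $\tau|_\fs=\id$. The problem thus becomes: determine all irreducible compact Hermitian symmetric pairs admitting an anti-holomorphic involution that fixes $[\g^\eta,\g^\eta]$ pointwise.

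Finally I would inspect the six families of irreducible compact Hermitian symmetric pairs. For each, $\fs$ and its centraliser $\R x_0$ in $\g$ are explicit; an anti-holomorphic $\tau$ with $\tau|_\fs=\id$ must have $\g^\tau$ a symmetric subalgebra containing $\fs$ but not $x_0$, with $\tau$ among the anti-holomorphic involutions (whose fixed algebras are the maximal compact subalgebras of the noncompact real forms of $\g_\C$). Comparing $\fs$ with these symmetric subalgebras, by dimension counting together with the centraliser condition, shows the requirement can be satisfied only for the orthogonal Grassmannian $(\g,\eta)=(\so_{n+1}(\R),\Ad(r_1r_{n+1}))$, i.e.\ $\SO_{n+1}(\R)/(\SO_2(\R)\times\SO_{n-1}(\R))$, where $\fs=\so_{n-1}(\R)$; there $\tau$ is, up to conjugacy, $\Ad(r_1)$, whence $\sigma=\tau\eta=\Ad(r_{n+1})$, giving $\g\cong\so_{n+1}(\R)$ with $\tau(x)=r_1xr_1$ and $\sigma(x)=r_{n+1}xr_{n+1}$ — type~(C). (Equivalently, once $\g\cong\so_N(\R)$ is pinned down, one classifies commuting involution pairs on $\so_N(\R)$ with $\dim(\g^{-\tau}\cap\g^{-\sigma})=1$; up to conjugacy the only solution is $\{\Ad(r_1),\Ad(r_N)\}$.) I expect this last step to be the main obstacle: excluding the remaining Hermitian families $\C\mathrm{P}^m$, the complex Grassmannians, $\SO_{2m}(\R)/\U_m$, $\Sp_m/\U_m$, and the two exceptional ones, which amounts to verifying that $[\g^\eta,\g^\eta]$ is too large — or that its centraliser in $\g$ is too small — to embed into any symmetric subalgebra arising from an anti-holomorphic involution, apart from the orthogonal-Grassmannian case.
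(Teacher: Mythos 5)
Your reduction of the compact simple case is correct and follows a genuinely different route from the paper's. The paper never mentions Hermitian symmetric pairs: it treats the noncompact case first, showing by a Killing-form/ideal argument (applied twice, to $\fz_\g(\fq_\fm)\cap\g^{-\sigma\tau}$ and to the sums of $\pm\lambda$-eigenspaces of $\ad x_0$) that $\fz_\g(\fq_\fm)=\g^{\sigma\tau}$ and that the restricted root system of $(\g,\tau)$ is reduced of type $A_1$; the rank-one classification then gives $\g\cong\so_{1,n}(\R)$, the involution $\sigma$ is pinned down inside $\Iso(\bH^n)_{e_1}\cong\OO_n(\R)$, and the compact case follows by duality. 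Your direction of duality (noncompact to compact), your observation that $(\g,\tau\sigma)$ is an irreducible compact Hermitian pair with $Z(\g^{\tau\sigma})=\R x_0$, and the identity $\g^{-\tau}\cap\g^{-\sigma}=\fs^{-\tau}\oplus\R x_0$, hence the criterion ``$\tau$ anti-holomorphic with $\tau\res_\fs=\id$'', are all sound.

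The gap is the last step, which carries essentially all the content and which you leave unexecuted (you say yourself it is ``the main obstacle''); moreover, the method you sketch --- dimension counting plus the centraliser condition on candidate symmetric subalgebras $\g^\tau\supseteq\fs$ --- would not go through as described. Existence of your $\tau$ is equivalent to the existence of an $\ad\fs$-invariant real form $\fp_+$ of $(\fp,J)$, $\fp=\fp_+\oplus J\fp_+$, with $[\fp_+,\fp_+]\subseteq\fs$ and $[\fp_+,J\fp_+]\subseteq\R x_0$ (set $\tau=+1$ on $\fs\oplus\fp_+$ and $-1$ on $\R x_0\oplus J\fp_+$); deciding this across the six families is a question about the real/complex/quaternionic type of the irreducible $\fs$-module $\fp$, not about dimensions. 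In fact a family-by-family \emph{exclusion} of the non-quadric families, as you propose, is impossible: $\Gr_2(\C^4)$, $\Sp_2/\U_2$ and $\SO_8(\R)/\U_4$ \emph{do} admit such a $\tau$, because $\C^2\otimes(\C^2)^*$, $S^2\C^2$ and $\Lambda^2\C^4$ are of real type; these members must be recognized as the quadrics $Q_4$, $Q_3$, $Q_6$ via $\su_4(\C)\cong\so_6(\R)$, $\fsp_2\cong\so_5(\R)$ and triality, rather than excluded, while the generic members (e.g.\ $\C\bP^m$, $m\ge 2$, where $\C^m$ is of complex or quaternionic type, and the two exceptional spaces, whose $\fp$ is not self-conjugate) are ruled out by the representation-type argument. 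None of this is in your sketch, so the classification claim in your final paragraph is asserted rather than proved. (A minor further loose end: in the complex-simple branch you declare ``type (NC) with $n=3$'' without determining $\sigma$; this is harmless if you fold $\fsl_2(\C)\cong\so_{1,3}(\R)$ into the general noncompact case instead of treating it separately.)
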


\begin{proof} That $(\g,\tau)$ is Riemannian implies that 
$\g^\tau$ is a compact Lie algebra. 
If $\g$ is not simple, then Lemma~\ref{lem:4.4} implies that 
$\g \cong \so_3(\R) \oplus \so_3(\R) \cong \so_4(\R)$ 
with $\tau(x,y) = (y,x)$ and 
$\sigma(x,y) = (\oline y, \oline x)$, which is also equivalent to 
$\sigma(x,y) = ( I_{1,1} y I_{1,1}, I_{1,1} x I_{1,1})$ 
(cf.~Remark~\ref{rem:4.7}). 
As we have seen in Example~\ref{ex:su2}, this implies that 
(C) holds with $n = 3$. 

We may therefore assume that $\g$ is simple. 
That $(\g,\tau)$ is 
Riemannian means that either $\g$ is compact or that $\tau$ is a 
Cartan involution. 
In the reductive subalgebra 
$\fh^a := \g^{\sigma\tau} = \fh_\fl \oplus \fq_\fm,$ 
the one-dimensional subspace $\fq_\fm$ 
is central by Lemma~\ref{lem:sigmatau1}. 
Let $\fb := \fz_\g(\fq_\fm) \cap \g^{-\sigma\tau}$ be the centralizer of~$\fq_\fm$ 
in $\fq^a := \g^{-\sigma\tau} = \fh_\fm \oplus \fq_\fl$. 
Then $\fb$ is invariant under $\fh^a$ 
and both involutions $\sigma$ and $\tau$. 
As the Cartan--Killing form $\kappa$ 
is non-degenerate on $\tau$-invariant subspaces, 
we have 
$\fq^a = \fb \oplus \fb_1$, where 
$\fb_1$ is the $\kappa$-orthogonal complement of $\fb$ in $\fq^a$. 
Then the subspace $[\fb,\fb_1] \subeq \fh^a$ satisfies 
\[ \kappa(\fh^a, [\fb, \fb_1]) 
=  \kappa([\fh^a, \fb], \fb_1) 
\subeq   \kappa(\fb, \fb_1) = \{0\},\] 
and since $\kappa$ is non-degenerate on the $\tau$-invariant subalgebra 
$\fh^a$, we obtain 
$[\fb,\fb_1] = \{0\}$. This implies that 
$\fb + [\fb,\fb]$ is a $\tau$-invariant ideal of $\g$ because 
it is invariant under $\fh^a$, $\fb$ and $\fb_1$. 
Hence the irreducibility of $(\g,\tau)$ and $\fq_\fm \not=\{0\}$ entail that 
$\fb = \{0\}$. This shows that 
\begin{equation}
  \label{eq:ha-cent}
 \fh^a = \fz_\g(\fq_\fm).
\end{equation}
Hence  the one-dimensional subspace $\fa := \fq_\fm$ is maximal 
abelian in $\fq$, so that  $\rank_\R(\g,\tau) = 1$. 

\nin {\bf Case 1.} $\g$ is not compact and $\tau$ is a Cartan involution: 
Let $0 \not= x_0 \in \fq_\fm$. Then $\ad x_0$ is diagonalizable and, for 
every eigenvalue $\lambda \in \R^\times$, the sum 
\[ \fb_\lambda := \g(\ad x_0, \lambda) + \g(\ad x_0, -\lambda) \] 
of the eigenspaces is a subspace of $\fq^a$,  invariant under 
$\fh^a$ and $\tau$. The same 
argument as above now implies that $\fb_\lambda + [\fb_\lambda, \fb_\lambda]$ 
is a $\tau$-invariant ideal of $\g$. As $\fb_\lambda \not=\{0\}$ by 
assumption, it follows that $\fq^a = \fb_\lambda$, so that 
$\Spec(\ad x_0) = \{0, \lambda, - \lambda\}$. We conclude that the 
restricted root system of $(\g,\tau)$ is $\Sigma = \{ \pm \lambda\}$ of 
type $A_1$. 
According to the classification of real non-compact simple 
Lie algebras (\cite[pp.~520--543]{Kn96}), 
this implies that $\g \cong \so_{1,n}(\R)$ for $n \geq 2$ and that $M \cong \bH^n$. 
\begin{footnote}
{To derive this from the tables in \cite{Hel78}, we 
note that $\su^*_{4}(\C) \cong \fsl_2(\bH) \cong \so_{1,5}(\R)$  
(cf.\ \cite[p.~615]{HN12}).}
\end{footnote}

To see how $\sigma$ looks like, we recall that it induces 
a dissecting isometric involution on the hyperbolic space 
$\bH^n \cong \SO_{1,n}(\R)/\SO_n(\R)$ which fixes the base point, 
say $e_1 \in \R^{1+n}$. As $\Iso(\bH^n)_{e_1} \cong \OO_n(\R)$ 
and all dissecting involutions in $\OO_n(\R)$ are conjugate to $r_{n+1}$, 
the assertion follows in this case. 

\nin {\bf Case 2.} $\g$ is compact simple: 
Then $\g^c = \fh \oplus i\fq$ is a non-compact simple Lie algebra 
with Cartan involution $\tau^c$, and $(\g^c, \tau^c, \sigma^c)$ is dissecting. 
Therefore Case $1$ implies that $\g^c \cong \so_{1,n}(\R)$, so that 
$\g \cong \so_{1 + n}(\R)$ with $\tau(x) = r_1 x r_1$ 
and $\sigma(x) = r_{n+1} x r_{n+1}$. 
\end{proof}

 \begin{theorem}{\rm (Classification of dissecting symmetric triples)}\label{thm:Classifi}
Every irreducible dissecting triple $(\g,\tau, \sigma)$ 
is equivalent to one of the type 
$(\so_{p,q}(\R), \tau, \sigma)$, where 
$\tau$ and $\sigma$ are induced by reflections corresponding to 
orthogonal anisotropic vectors in $\R^{p,q}$. 
 \end{theorem}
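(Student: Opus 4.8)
The plan is to peel off cases successively and reduce to the residual situation where $\g$ is simple and not complex, which is then settled by passing to the compact dual and invoking Theorem~\ref{thm:6.6}. First, if $(\g,\tau)$ is irreducible but $\g$ is not semisimple, Remark~\ref{rem:irred}(b) forces $\dim\g=1$ and $\tau=-\id_\g$, whence \eqref{eq:discont} forces $\sigma=-\id_\g=\tau$: this is the triple of $\so_{1,1}(\R)\cong\R$, where $-\id$ is induced both by the reflection corresponding to the (timelike) coordinate vector $e_1$ and by the one corresponding to the (spacelike) $e_2$, two orthogonal anisotropic vectors of $\R^{1,1}$. So I may assume $\g$ is semisimple; then $\tau\ne\sigma$ by Remark~\ref{rem:irred}(a), $(\g,\tau)$ is non-abelian, and I run through the list (1)--(4) of irreducible symmetric pairs.

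If $\g$ is not simple, $(\g,\tau)$ is of type (4) and Lemma~\ref{lem:4.4} identifies the triple, up to equivalence, with $\g_1\oplus\g_1$ carrying the flip $\tau$ and $\sigma(x,y)=(\sigma_1(y),\sigma_1(x))$, where $\g_1\cong\su_2(\C)\cong\so_3(\R)$ with $\sigma_1(x)=\oline x$, or $\g_1\cong\fsl_2(\R)\cong\so_{2,1}(\R)$ with $\sigma_1(x)=-x^\top$ or $\rI_{1,1}x\rI_{1,1}$. By Examples~\ref{ex:su2} and~\ref{ex:sl2} these are precisely the dissecting triples of the quadrics $\bS^3\subseteq(\R^4,\beta_{4,0})$ and $\SL_2(\R)\subseteq(\R^4,\beta_{2,2})$, so $\g\cong\so_4(\R)=\so_{4,0}(\R)$, resp.\ $\so_{2,2}(\R)$, and $\tau,\sigma$ are induced by the reflections corresponding to the orthogonal anisotropic vectors $\1$ and $y$ appearing there. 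If $\g$ is complex simple, then at least one of $\tau,\sigma$ must be antilinear --- the $(-1)$-eigenspace of a complex-linear involution is a complex, hence even-real-dimensional, subspace, so two complex-linear involutions cannot have $(-1)$-eigenspaces meeting in a real line. Using that $(\g,\sigma,\tau)$ is also dissecting (Remark~\ref{rem:irred}(c)), I may assume $\tau$ antilinear, and then Corollary~\ref{cor:4.9} gives $\g\cong\fsl_2(\C)\cong\so_{3,1}(\R)$ and, through its proof, that $(\g,\tau,\sigma)$ is the Cartan dual of the type-(C) triple on $\so_4(\R)$; by the duality computation in Example~\ref{ex:quad} this is $\so_{3,1}(\R)$ with $\tau,\sigma$ induced by reflections corresponding to orthogonal anisotropic vectors of $\R^{3,1}$.

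There remains the case $\g$ simple and not complex. Then the compact dual $\g^r$ is simple (Remark~\ref{rem:grsimp}) and compact, $(\g^r,\tau^r,\sigma^r)$ is dissecting (Lemma~\ref{lem:compdual}), irreducible and non-abelian, and $(\g^r,\tau^r)$ is Riemannian; so Theorem~\ref{thm:6.6} gives $(\g^r,\tau^r,\sigma^r)\cong(\so_{n+1}(\R),\,r_1(\cdot)r_1,\,r_{n+1}(\cdot)r_{n+1})$ with $n+1\ne4$ (as $\so_4(\R)$ is not simple). If $\g$ is compact this is already the claim; otherwise I pick a Cartan involution $\theta$ of $\g$ commuting with $\tau,\sigma$ and let $\theta^r$ be the induced involution of $\g^r$, so that $\g$ is the Cartan dual of $(\g^r,\theta^r)$ inside $\g_\C=\so_{n+1}(\C)$. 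The task is to pin down $\theta^r$. Since the kernel of $\OO_{n+1}(\R)\to\Aut(\so_{n+1}(\R))$ is $\{\pm\1\}$, any orthogonal $g$ implementing $\theta^r$ satisfies $gr_1g^{-1}=\pm r_1$; as $r_1$ has a simple $(-1)$-eigenvalue the sign is $+$, so $g$ normalizes $\R e_1$, and likewise $\R e_{n+1}$. Hence $g$ preserves $\R e_1\oplus\R e_{n+1}\oplus\{e_1,e_{n+1}\}^\bot$, and after a conjugation inside the centralizer of $r_1$ and $r_{n+1}$ I may take $\theta^r$ to be $x\mapsto JxJ$ for a diagonal orthogonal involution $J$. (This uses $\Aut(\so_{n+1}(\R))\cong\OO_{n+1}(\R)/\{\pm\1\}$ for $n+1\ne 8$; for $n+1=8$ one adds that $r_1(\cdot)r_1$ represents a transposition in $\mathrm{Out}(\so_8)\cong S_3$ while distinct transpositions do not commute, so $\theta^r$ cannot be triality-twisted.) Writing $p,q$ for the eigenspace dimensions of $J$, the Cartan dual, realized on the corresponding subspace of $\C^{n+1}$ as in Example~\ref{ex:quad}, is $\g\cong\so_{p,q}(\R)$ on $\R^{p,q}$, and $\tau,\sigma$ restrict to the involutions induced by the reflections corresponding to the coordinate vectors $e_1,e_{n+1}$, which are orthogonal and anisotropic for $\beta_{p,q}$.

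I expect the main obstacle to be the last paragraph: recognizing the correct real form of $\so_{n+1}(\C)$ and checking that $\tau,\sigma$ survive as reflections in coordinate hyperplanes. Its delicate ingredients are the control of the Cartan involution $\theta^r$ inside $\Aut(\so_{n+1}(\R))$ --- with $\so_8$ needing a triality remark --- and the bookkeeping, through the Cartan dualities of Example~\ref{ex:quad}, that the two involutions are induced by reflections associated to orthogonal \emph{anisotropic} vectors. Everything earlier is essentially an assembly of Lemma~\ref{lem:4.4}, Corollary~\ref{cor:4.9}, and the example computations.
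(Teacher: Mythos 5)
Your proof follows the paper's own route almost step for step: Lemma~\ref{lem:4.4} together with Examples~\ref{ex:sl2} and~\ref{ex:su2} for non-simple $\g$, Corollary~\ref{cor:4.9} for complex simple $\g$, and for simple non-complex $\g$ the passage to the compact dual, Theorem~\ref{thm:6.6}, and the determination of the commuting involution $\theta^r$ by an element of $\OO_{n+1}(\R)$ which is then diagonalized in the centralizer of $r_1,r_{n+1}$ and used to realize $\g$ on $V^\gamma\oplus i\,V^{-\gamma}$ --- this is exactly the paper's concluding argument. Your explicit treatment of the one-dimensional abelian case, identified with $\so_{1,1}(\R)$ with both involutions induced by the coordinate reflections, is correct and covers a degenerate case the paper leaves implicit.

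Two remarks. First, in the complex simple case you assert that $(\g,\tau,\sigma)$ is the Cartan dual of the type-(C) triple on $\so_4(\R)$. That is only the sub-case in which $\tau$ is the conjugation with respect to $\su_2(\C)$; Corollary~\ref{cor:4.9} equally allows $\tau$ to be the conjugation with respect to $\fsl_2(\R)$, and then the $c$-dual (taken with respect to $\tau$) is the $\so_{2,2}(\R)$ triple of Example~\ref{ex:sl2}, not the $\so_4(\R)$ one. Your conclusion is unaffected, since that triple is also reflection-induced and the class of such triples is stable under duality (end of Example~\ref{ex:quad}), but as written you silently drop this sub-case; it should be stated to make the argument airtight. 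Second, your justification that $\theta^r$ is induced by $\OO_{n+1}(\R)$, including the triality caveat for $n+1=8$ (the outer class of $\theta^r$ must centralize the transposition represented by $\tau^r$ in $S_3$, hence is trivial or equal to it, so $\theta^r$ comes from $\OO_8(\R)$), is valid; the paper avoids the case distinction by noting that $\theta^r$ commutes with $\tau^r$, hence induces an isometric involution of the sphere $\bS^n$ and therefore comes from $\OO_{n+1}(\R)$ for every $n$. These are presentational differences only; the proposal is essentially the paper's proof.
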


\begin{proof} That the specified triples 
$(\so_{p,q}(\R),\tau,  \sigma)$ are dissecting 
follows from Example~\ref{ex:quad}. 

If $(\g,\tau, \sigma)$ is an irreducible dissecting triple 
for which $\g$ is not simple, then 
$\g = \g_1 \oplus \g_1$ for 
$\g_1 \cong \fsl_2(\R)$ or $\su_2(\C)$ as in 
Lemma~\ref{lem:3.1}. 
For $\g_1 \cong \fsl_2(\R)$, we have seen in 
Example~\ref{ex:sl2} that these triples are of the form 
$(\so_{2,2}(\R), \tau, \sigma)$ as required. 
If $\g_1 = \su_2(\C)$, then 
$(\g,\tau)$ is Riemannian, so that Theorem~\ref{thm:6.6} 
implies that $(\g,\tau,\sigma)$ is equivalent to a triple 
$(\so_4(\R), \tau,\sigma)$ of the required form. 

We may therefore assume that $\g$ is simple. 
If $\g$ is a complex Lie algebra, then Corollary~\ref{cor:4.9} 
implies that $\g^r \cong \so_4(\R)$, and 
$(\g^r, \tau^r, \sigma^r)$ is dissecting by 
Lemma~\ref{lem:compdual}.  By Theorem~\ref{thm:6.6} it is 
equivalent to a triple with $\tau^r(x) = r_1 x r_1$ and $\sigma^r(x) = r_4 x r_4$.  

If $\g$ is simple and not complex, then 
$(\g^r, \tau^r, \sigma^r)$ is a compact dissecting triple, 
where $\g^r$ is simple (Remark~\ref{rem:grsimp}), hence isomorphic to 
$(\so_{n+1}(\R), \tau^r, \sigma^r)$, where 
$\tau^r(x) = r_1 x r_1$ and $\sigma^r(x) = r_{n+1} x r_{n+1}$ 
(Theorem~\ref{thm:6.6}). 
To see how $(\g,\tau,\sigma)$ looks like, 
we have to determine all involutions 
$\theta^r$ on $\g^r = \so_{n+1}(\R)$ commuting with 
$\tau^r$ and $\sigma^r$. 
As any such involution corresponds to an isometric 
involution on the sphere $\bS^n$, it is induced by an 
element $\gamma \in \OO_{n+1}(\R)$. 
That $\gamma$ commutes with $\tau^r$ means that 
$\gamma r_1 \gamma = \pm r_1$ (note that $\Ad(-\1) = \id_{\g^r}$). 
If $\gamma r_1 \gamma = - r_1$, then 
$\gamma$ exchanges the two eigenspaces of $r_1$. 
But, as $n > 1$,  the two eigenspaces of $r_1$ have different dimensions, 
so that we must have 
$\gamma r_1 \gamma = r_1$, i.e., $\gamma$ commutes with $r_1$. 
We likewise see that $\gamma$ commutes with $r_{n+1}$. Hence 
$\gamma$ is conjugate under the centralizer of $r_1$ and $r_{n+1}$ to an 
element of the form 
\[ \gamma = \diag(\gamma_1, \ldots, \gamma_{n+1}), \qquad \gamma_j = \pm 1.\] 
Since $\pm \gamma$ induce the same automorphism on $\g^r$, we 
may assume that $\gamma_1 = 1$. Permuting the entries of $\gamma$, we may 
now assume that there exists a $p \leq n+1$ such that 
we have for $q := n - p$  
\[ \gamma_1= \gamma_2 = \ldots = \gamma_{p} = 1, \qquad 
\gamma_{p+1}  = \ldots = \gamma_{p+q} = -1.\] 
Depending on $\gamma_{n+1}$, we find the two Lie algebras: 
\[
\g \cong  \begin{cases} 
\so_{p+1,q}(\R) & \text{ for } \gamma_{n+1} = 1 \\
\so_{p,q+1}(\R) & \text{ for } \gamma_{n+1}= -1,    
\end{cases}\qquad\qquad \mbox{ where } \qquad n  = p + q. \]
For $V := \R^{n+1}$, these Lie algebras can be realized naturally on the real 
linear subspace 
$V^c := V^\gamma \oplus i V^{-\gamma} \subeq \C^{n+1}$, on which they 
preserve a form of signature $(p+1, q)$, resp., $(p,q+1)$, according to 
$\dim V^{-\gamma} = q$ and $q+1$, respectively. 
This completes the proof of the classification theorem. 
\end{proof}


\begin{thebibliography}{aaaaaaaa}

\bibitem[AKLM06]{AKLM06} 
Alekseevsky, D., Kriegl, A., Losik, M., and P. W. Michor, 
{\it Reflection groups on Riemannian manifolds}, 
Annali di Matematica Pura ed Applicata {\bf 186} (2006), 
 25--58 

\bibitem[AFG86]{AFG86} de Angelis, G., D. de Falco, and 
G. Di Genova, {\it Random fields on Riemannian manifolds: a constructive 
approach}, Comm. Math. Phys. {\bf 103} (1986), 297--303

\bibitem[Di04]{Di04} Dimock, J., {\it Markov quantum fields on a manifold}, 
Rev. Math. Phys. {\bf 16:2} (2004), 243--255


\bibitem[F-J86]{F-J86} M. Flensted-Jensen, ``Analysis on 
Non-Riemannian Symmetric Spaces,'' CBMS series \textbf{61}, 1986

\bibitem[Hel78]{Hel78} Helgason, S., ``Differential Geometry, Lie Groups, 
and Symmetric Spa\-ces,'' Acad. Press, London, 1978

\bibitem[HN12]{HN12}
Hilgert, J.,  and K.-H.\ Neeb,
``Structure and Geometry of Lie Groups,'' {Springer Monographs in Mathematics},
{Springer}, {New York}, 2012 


\bibitem[H{\'O}97]{HO97} Hilgert, J., and G. {\'O}lafsson, ``Causal Symmetric Spaces, Geometry and Harmonic 
Analysis,''  Perspectives in Mathematics {\bf 18}, Academic Press, 1997

\bibitem[JR08]{JR08}
Jaffe, A.,  and G. Ritter, {\it Reflection positivity and monotonicity}, 
J. Math. Phys. {\bf 49} (2008), 052301

\bibitem[Kn96]{Kn96} Knapp, A.W., ``Lie Groups Beyond an Introduction,''
Progress in Math. {\bf 140}, Birkh\"auser, 1996 

\bibitem[Lo69]{Lo69} Loos, O., ``Symmetric spaces I: General theory,''
W. A. Benjamin, Inc., New York, Amsterdam, 1969

\bibitem[N\'O18]{NO18} Neeb, K.-H., and G.\ \'Olafsson, 
``Reflection Positivity---
A Representation Theoretic Perspective,'' 
Springer Briefs in Mathematical Physics {\bf 32}, 2018 

\bibitem[N\'O19]{NO19} --- , {\it Reflection positivity on spheres}. In preparation, to be submitted in 2019

\bibitem[NO61]{NO61} Nomizu, J., and H. Ozeki, {\it 
The existence of complete Riemannian metrics}, 
Proceedings of the Amer. Math. Soc. {\bf 12:6} (1961), 889--891

\bibitem[Wa72]{Wa72} Warner, G., ``Harmonic Analysis on Semisimple Lie Groups
I,'' Springer Verlag, Berlin, Heidelberg, New York, 1972 

\end{thebibliography}
 \end{document}